\theoremstyle{plain}
\newtheorem{thm}{\protect\theoremname}[section]
\theoremstyle{definition}
\newtheorem{problem}[thm]{\protect\problemname}
\theoremstyle{plain}
\newtheorem{lem}[thm]{\protect\lemmaname}
\theoremstyle{remark}
\newtheorem{rem}[thm]{\protect\remarkname}
\theoremstyle{plain}
\newtheorem{prop}[thm]{\protect\propositionname}
\theoremstyle{plain}
\newtheorem{cor}[thm]{\protect\corollaryname}
\theoremstyle{definition}
\newtheorem{example}[thm]{\protect\examplename}
\theoremstyle{definition}
\newtheorem{defn}[thm]{\protect\definitionname}
\newcommand*{\e}{\mathrm{e}}
\renewcommand*{\i}{\mathrm{i}}
\newcommand{\R}{\mathbb{R}}
\newcommand{\N}{\mathbb{N}}
\newcommand{\Z}{\mathbb{Z}}
\newcommand{\C}{\mathbb{C}}
\newcommand{\dom}{\operatorname{dom}}
\newcommand{\ran}{\operatorname{ran}}
\renewcommand{\d}{\,\mathrm{d}}
\renewcommand{\Re}{\operatorname{Re}}
\newcommand{\var}{\operatorname{var}}
\renewcommand{\Im}{\operatorname{Im}}
\renewcommand{\tilde}{\widetilde}
\newcommand{\rk}{\operatorname{rk}}
\newcommand{\diag}{\operatorname{diag}}
\newcommand{\D}{\mathrm{D}}
\newcommand{\tr}{\operatorname{tr}}
\newcommand{\Span}{\operatorname{lin}}
\theoremstyle{definition}
\providecommand{\corollaryname}{Corollary}
\providecommand{\definitionname}{Definition}
\providecommand{\examplename}{Example}
\providecommand{\lemmaname}{Lemma}
\providecommand{\problemname}{Problem}
\providecommand{\propositionname}{Proposition}
\providecommand{\remarkname}{Remark}
\providecommand{\theoremname}{Theorem}
\begin{document}
\title{Characterisation for Exponential Stability of port-Hamiltonian Systems}
\author{Sascha Trostorff\thanks{Mathematisches Seminar, CAU Kiel, Germany},
Marcus Waurick\thanks{Institut für Angewandte Analysis, TU Bergakademie Freiberg, Germany}}
\maketitle
\begin{abstract}
Given an energy-dissipating port-Hamiltonian system, we characterise
the exponential decay of the energy via the model ingredients under
mild conditions on the Hamiltonian density $\mathcal{H}$. In passing,
we obtain generalisations for sufficient criteria in the literature
by making regularity requirements for the Hamiltonian density largely
obsolete. The key assumption for the characterisation (and thus the
sufficient criteria) to work is a uniform bound for a family of fundamental
solutions for some non-autonomous, finite-dimensional ODEs. Regularity
conditions on $\mathcal{H}$ for previously known criteria such as
bounded variation are shown to imply the key assumption. Exponentially
stable port-Hamiltonian systems with densities in $L_{\infty}$ only
are also provided.
\end{abstract}
\textbf{Keywords} port-Hamiltonian systems, Exponential stability,
$C_{0}$-semi-groups, Infinite-dimensional systems theory\\
\label{=00005Cnoindent}\textbf{MSC2020 }93D23, 37K40, 47D06, 34G10

\tableofcontents{}

\section{Introduction}

In \cite{AvdS} van der Schaft described the framework of port-Hamiltonian
systems. It has since then triggered manifold research and ideas.
For this we refer to \cite{JZ,vdS_Jeltsema_2014,Augner,SkrepekDiss,Waurick2019}
and the references therein, see also \cite{Jacob_Zwart2018_survey}
for a survey. The basic idea is to describe a physical -- mostly
energy conserving or at least energy dissipating -- phenomenon in
terms of a partial differential equation in the underlying physical
domain together with suitable boundary conditions. These boundary
conditions now are what is thought of as so-called ports. At these
ports one can steer and measure data. Thus, the basic system and particularly
the one-plus-one-dimensional port-Hamiltonian system serves as a prototype
boundary control system. The emphasis is on hyperbolic type partial
differential equations. Quite naturally, it is of interest to understand
those boundary conditions leading to an evolution of the state variable
to have exponentially decaying energy. More precisely, we consider
the operator 
\begin{equation}
\mathcal{\mathcal{A}}\coloneqq P_{1}\partial_{x}\mathcal{H}+P_{0}\mathcal{H}\tag*{{(A1)}}\label{eq:calA}
\end{equation}
as an operator in $H\coloneqq L_{2}(a,b)^{d}$ (as sets) endowed with
the scalar-product $(u,v)\mapsto\langle\mathcal{H}u,v\rangle_{L_{2}}$.
Here, $(a,b)\subseteq\mathbb{R}$ is a bounded interval, $P_{1}=P_{1}^{*}\in\mathbb{R}^{d\times d}$
is an invertible $d\times d$-matrix, $P_{0}=-P_{0}^{*}\in\mathbb{R}^{d\times d}$
and $\partial_{x}$ is the usual weak derivative operator. The mapping
$\mathcal{H}\colon(a,b)\to\mathbb{R}^{d\times d}$ is measurable,
attains values in the non-negative self-adjoint matrices and is strictly
bounded away from $0$ and bounded above. The port-Hamiltonian operator
$\mathcal{A}$ is now accompanied with suitable boundary conditions
encoded in a full rank matrix $W\in\mathbb{R}^{d\times2d}$ satisfying
\begin{equation}
W\left(\begin{array}{cc}
P_{1} & -P_{1}\\
1_{d} & 1_{d}
\end{array}\right)^{-1}\left(\begin{array}{cc}
0 & 1_{d}\\
1_{d} & 0
\end{array}\right)\left(W\left(\begin{array}{cc}
P_{1} & -P_{1}\\
1_{d} & 1_{d}
\end{array}\right)^{-1}\right)^{\ast}\geq0\tag*{{(WB)}}\label{eq:WB}
\end{equation}
 in the way that
\begin{equation}
\dom(\mathcal{A})=\left\{ u\in H\,;\,\mathcal{H}u\in H^{1}(a,b)^{d},\,W\left(\begin{array}{c}
\left(\mathcal{H}u\right)(b)\\
\left(\mathcal{H}u\right)(a)
\end{array}\right)=0\right\} .\tag*{{(A2)}}\label{eq:calAdom}
\end{equation}
The above conditions render $\mathcal{A}$ to be the generator of
a $C_{0}$-semi-group of contractions on $H$ (see e.g. \cite[Theorem 7.2.4]{JZ}
or \cite[Theorem 1.1]{JMZ_2015}, note also \cite[Theorem 1.5]{JMZ_2015}
for conditions for $\mathcal{A}$ being the generator of a general
$C_{0}$-semi-group). For the theory of $C_{0}$-semi-groups we refer
to the monographs \cite{Engel_Nagel2000,Pazy}. It is the aim of this
article to characterise all such settings for which this semi-group
admits exponential decay. The details of the definitions are given
in \prettyref{sec:Preliminaries}. For the time being we comment on
approaches available in the literature and difficulties as well as
elements of our strategy to obtain our characterisation result. We
remark that stability and stabilisation of port-Hamiltonian systems
is an important topic in control theory, see e.g. \cite{Villegas_2009,Ramirez2014,Ramirez_2017,Augner2019,JSchmid}.

By the celebrated Lumer--Phillips theorem (see, e.g., \cite[3.15 Theorem]{Engel_Nagel2000})
for $\mathcal{A}$ to generate a semi-group of contractions it is
equivalent that $\mathcal{A}$ is $m$-dissipative. This property
is independent of the Hamiltonian density $\mathcal{H}$ encoding
the material coefficients in actual physical systems. Hence, well-posedness
and energy dissipation is not hinging on the actual measurements of
the material parameters. Thus, one might think that, too, exponential
stability is independent of the Hamiltonian. This is, however, not
the case as the example in \cite[Section 5]{Engel2013} demonstrates.
This is even more so surprising as the up to now only\footnote{If $\mathcal{H}$ is smooth, a strict inequality in \ref{eq:WB} leads
to exponential stability for the port-Hamiltonian semi-group as well.
However, by \cite[Lemma 9.1.4]{JZ} and its proof, a strict inequality
in \ref{eq:WB} implies the validity of the  condition in \prettyref{thm:suffcrit}.} available condition yielding an exponentially stable semi-group is
(almost) independent of the actual Hamiltonian.
\begin{thm}[{\cite[Theorem 3.5]{JSchmid}}]
\label{thm:suffcrit}Let $\mathcal{H}$ be of bounded variation and
$\mathcal{A}$ given by \ref{eq:calA} and \ref{eq:calAdom} is $m$-dissipative.
If there exist $c>0$ and $\eta\in\{a,b\}$ such that for all $u\in\dom(\mathcal{A})$
\[
\langle u,\mathcal{A}u\rangle_{H}\leq-c\|\mathcal{H}u(\eta)\|^{2},
\]
then $\mathcal{A}$ generates an exponentially stable $C_{0}$-semi-group.
\end{thm}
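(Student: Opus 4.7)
My plan is to deduce the theorem from the Gearhart--Pr\"uss characterisation of exponential stability. Since $-\mathcal{A}$ generates a contraction $C_{0}$-semigroup on the Hilbert space $H$, exponential stability is equivalent to
\[
\i\R\subseteq\rho(-\mathcal{A})\quad\text{and}\quad\sup_{\lambda\in\R}\|(\i\lambda+\mathcal{A})^{-1}\|<\infty.
\]
As $m$-accretivity already places the open right half-plane in $\rho(-\mathcal{A})$, the task reduces to producing an a priori estimate $\|u\|_{H}\leq C\|f\|_{H}$, with $C$ independent of $\lambda\in\R$, whenever $u\in\dom(\mathcal{A})$ and $f\in H$ satisfy $(\i\lambda+\mathcal{A})u=f$; both resolvent existence on $\i\R$ and the uniform bound then follow from this estimate together with a standard Neumann-series argument from the right half-plane.

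First I would test the resolvent equation with $u$ and take the real part. The $\i\lambda$ term vanishes and the dissipation hypothesis supplies
\[
c\|\mathcal{H}u(\eta)\|^{2}\leq\Re\langle u,\mathcal{A}u\rangle_{H}=\Re\langle u,f\rangle_{H}\leq\|u\|_{H}\|f\|_{H},
\]
so the trace $\mathcal{H}u(\eta)$ at the active endpoint is controlled by $(\|u\|_{H}\|f\|_{H})^{1/2}$.

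Second I would propagate this one-point control to the whole interval through the finite-dimensional, non-autonomous ODE hidden in $\mathcal{A}$. Setting $v:=\mathcal{H}u\in H^{1}(a,b)^{d}$, the identity $(\i\lambda+\mathcal{A})u=f$ becomes
\[
\partial_{x}v=-\i\lambda P_{1}^{-1}\mathcal{H}^{-1}v-P_{1}^{-1}P_{0}v+P_{1}^{-1}f,
\]
and if $\Phi_{\lambda}$ denotes the fundamental solution of the homogeneous part, variation of constants from $\eta$ expresses $v$ in terms of $v(\eta)$ and $f$. Provided the uniform bound $M:=\sup\{\|\Phi_{\lambda}(x,y)\|:\lambda\in\R,\,x,y\in[a,b]\}$ is finite, integration in $x$ together with the boundedness and coercivity of $\mathcal{H}$ (which render $\|u\|_{H}$, $\|u\|_{L_{2}}$ and $\|v\|_{L_{2}}$ mutually equivalent) and Young's inequality yield $\|u\|_{H}\leq C\|f\|_{H}$ with $C$ independent of $\lambda$, as required.

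The main obstacle is therefore the uniform bound on $\Phi_{\lambda}$. Pointwise in $x$, the $\lambda$-dependent coefficient $-\i\lambda P_{1}^{-1}\mathcal{H}^{-1}(x)$ is similar to a skew-Hermitian matrix, since $\mathcal{H}(x)^{-1/2}P_{1}^{-1}\mathcal{H}(x)^{-1/2}$ is selfadjoint; however the similarity transform varies with $x$ and is a priori only measurable, so a frozen-coefficient estimate does not close. Here the bounded-variation hypothesis on $\mathcal{H}$ is exactly what converts the $x$-dependent conjugation into a manageable perturbation: after the substitution $w:=\mathcal{H}^{1/2}u$, the squared energy $\|w(x)\|^{2}$ satisfies a Gr\"onwall-type differential inequality driven by the total variation of $\mathcal{H}^{\pm 1/2}$ on $[a,b]$, which is a $\lambda$-independent quantity. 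Carrying out this reduction rigorously is the technical core of the argument and embodies the ``key assumption'' singled out in the abstract.
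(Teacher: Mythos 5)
Your proposal is correct and takes a genuinely more direct route than the paper. The paper splits the argument into two independent theorems: Theorem~\ref{thm:HBV} shows that $\mathcal{H}$ of bounded variation implies the uniform bound \ref{eq:(B)} on the fundamental solution (proved by a Gr\"onwall argument applied to the energy $\langle v,\mathcal{O}v\rangle$, where $\mathcal{O}$ is a conjugate of $\mathcal{H}$), and Theorem~\ref{thm:suffcrit-1} shows that \ref{eq:(B)} together with the dissipation estimate implies exponential stability, by first reducing to the reference operator $D\partial_s$ on $L_2(-\tfrac{1}{2},\tfrac{1}{2})^d$, rewriting the boundary condition via a contraction matrix $M$ (Theorem~\ref{thm:GenContrSp}), translating the dissipation estimate into a pointwise inequality for $M$ (Propositions~\ref{prop:posdefAposdefC} and~\ref{prop:ReCIneqM}), and finally proving the uniform invertibility of the boundary matrices $S_t$ demanded by Corollary~\ref{cor:1stcons}. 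You bypass this reference-case machinery entirely: the trace bound $c\|\mathcal{H}u(\eta)\|^{2}\leq\|u\|_{H}\|f\|_{H}$ obtained from testing the resolvent equation, together with variation of constants and the uniform bound on the fundamental solution, gives the a priori estimate $\|u\|_{H}\leq C\|f\|_{H}$ directly, and with $m$-accretivity this yields the imaginary-axis resolvent bound needed by Gearhart--Pr\"uss. Three small remarks on the details you would still have to carry out: (i) your homogeneous ODE has coefficient $-\i\lambda P_{1}^{-1}\mathcal{H}^{-1}$ (plus a $P_{0}$-term), not the paper's $\i tP_{1}^{-1}\mathcal{H}$, but since $\mathcal{H}^{-1}$ is again of bounded variation and bounded above and below, the same Gr\"onwall mechanism applies; (ii) the paper avoids conjugating by $\mathcal{H}^{1/2}$ (and thus the need to show $\mathcal{H}^{\pm1/2}\in BV$) by instead differentiating $\langle v,\mathcal{O}v\rangle$ with a $BV$--$H^{1}$ product rule (Theorem~\ref{thm:productruleBVH1}); (iii) the paper strips off $P_{0}$ at the outset via Trotter--Kato, while you keep it as a bounded perturbation absorbed into the Gr\"onwall constant --- both are fine. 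The trade-off is that the paper's heavier reference-case apparatus simultaneously yields the full characterisation of Theorem~\ref{thm:mainresult}, whereas your argument, while shorter, is tailored to this sufficient criterion only.
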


Note that the example in \cite{Engel2013} assuring the dependence
of $\mathcal{H}$ whether or not the $C_{0}$-semi-group is exponentially
stable, uses constant $\mathcal{H}$. Hence, even in the class of
constant Hamiltonians, the above condition is not a characterisation.
Also, as a second drawback of the results available in the literature,
the Hamiltonian always needs to satisfy certain regularity requirements.
Apart from the more recent advancement in \prettyref{thm:suffcrit},
the results in \cite[Theorem 9.1.3]{JZ} require continuous differentiability
or Lipschitz continuity, see \cite[Theorem III.2]{Villegas_2009}.
We refer to the results in \cite{Augner} for non-autonomous set ups.

The present article aims at replacing the regularity conditions altogether.
For this define $\Phi_{t}$ to be the fundamental solution associated
with 
\[
u'(x)=\i tP_{1}^{-1}(\mathcal{H}(x)^{-1}-P_{0})u(x)\quad(x\in(a,b))
\]
with $\Phi_{t}(a)=1_{d}\coloneqq\diag(1,\ldots,1)\in\R^{d\times d}.$
The key assumption we shall impose here is
\begin{equation}
\sup_{t\in\mathbb{R}}\|\Phi_{t}\|_{\infty}=\sup_{t\in\mathbb{R}}\sup_{x\in(a,b)}\|\Phi_{t}(x)\|<\infty.\tag*{{(B)}}\label{eq:(B)}
\end{equation}
We shall see below that \ref{eq:(B)} is satisfied in many relevant
cases, e.g., if $\mathcal{H}$ is scalar or of bounded variation.
The main theorem of the present article is a characterisation result
of exponential stability in case \ref{eq:(B)} is satisfied.
\begin{thm}
\label{thm:mainresultA} Assume condition \ref{eq:(B)} and that $\mathcal{A}$
given by \ref{eq:calA} and \ref{eq:calAdom} is $m$-dissipative;
i.e. $\mathcal{A}$ generates a contraction semi-group. Then the following
conditions are equivalent:

\begin{enumerate}[(i)]

\item $\mathcal{A}$ generates an exponentially stable $C_{0}$-semi-group.

\item For all $t\in\R$ 
\[
T_{t}\coloneqq W\left(\begin{array}{c}
\Phi_{t}(b)\\
1_{d}
\end{array}\right)
\]
is invertible with $\sup_{t\in\R}\|T_{t}^{-1}\|<\infty.$

\end{enumerate}
\end{thm}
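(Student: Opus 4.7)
The plan is to invoke the Gearhart--Pr\"uss theorem for contraction semi-groups on Hilbert spaces (see e.g.\ \cite[V.1.11 Theorem]{Engel_Nagel2000}), which rephrases condition~(i) as
\[
i\R\subseteq\rho(-\mathcal{A})\quad\text{together with}\quad \sup_{s\in\R}\|(is+\mathcal{A})^{-1}\|<\infty.
\]
The entire task thus becomes translating this abstract resolvent estimate into the concrete statement about the family $T_t$.

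The first concrete step is to write down an explicit variation-of-constants representation for the solution $u\in\dom(\mathcal{A})$ of the resolvent equation $(\mathcal{A}+it)u=f$, for given $t\in\R$ and $f\in H$. This is a first-order linear ODE on $(a,b)$ with values in $\C^d$, subject to the two-point condition $W_B\binom{(\mathcal{H}u)(b)}{(\mathcal{H}u)(a)}=0$. After the change of unknowns that aligns the fundamental matrix of the homogeneous problem with the $\Phi_t$ from the statement, the general solution decomposes as $u=u_h+u_p$, where $u_h$ is parameterised by a vector $c\in\C^d$ through $\Phi_t$ and $u_p$ is a particular solution whose boundary trace is an explicit bounded linear functional of $f$ (estimated in terms of $\sup_t\|\Phi_t\|_\infty$ and the $L_\infty$-bounds on $\mathcal{H}$ and $\mathcal{H}^{-1}$). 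Imposing the boundary condition turns the selection of $c$ into the finite-dimensional linear system $T_t c = \ell_t(f)$ with an explicit functional $\ell_t$. In particular, $\mathcal{A}+it$ is bijective on $H$ if and only if $T_t$ is invertible on $\C^d$.

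With this representation at hand, both implications become transparent. For (i)$\Rightarrow$(ii), Gearhart--Pr\"uss forces $T_t$ to be injective, hence invertible since $T_t\in\R^{d\times d}$; the uniform bound on $\|T_t^{-1}\|$ is then obtained by feeding into $(\mathcal{A}+it)^{-1}$ test functions $f$ designed to single out the $\Phi_t c$ contribution, an operation whose cost is controlled by \ref{eq:(B)}. For (ii)$\Rightarrow$(i), one estimates $u_h$ and $u_p$ separately in the $H$-norm: $u_p$ is bounded by $\|f\|_H$ times a universal constant depending only on $\sup_t\|\Phi_t\|_\infty$ and on $\|\mathcal{H}^{\pm1}\|_\infty$, while $u_h$ is bounded by $\|T_t^{-1}\|\cdot\|\Phi_t\|_\infty$ times a bounded functional of $f$. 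Combined with $\sup_t\|T_t^{-1}\|<\infty$ and \ref{eq:(B)}, this yields the required uniform resolvent bound, and Gearhart--Pr\"uss closes the loop to~(i).

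The most delicate part is the setting up of the resolvent representation so that precisely the matrix $T_t$ from the statement appears as the boundary operator. A direct substitution $v=\mathcal{H}u$ naturally leads to an ODE whose fundamental matrix involves $\mathcal{H}^{-1}$, while $\Phi_t$ is defined via an ODE involving $\mathcal{H}$; bridging this mismatch -- through a suitable duality, an integration by parts absorbing $\mathcal{H}$ into the test function, or a direct reformulation of the ODE -- while ensuring the correspondence is exact at the boundary (so that $W_B$ is applied to the correct trace) is the technical heart of the argument. Once this identification is established, the remaining estimates follow routinely from \ref{eq:(B)} and the uniform bounds on $\mathcal{H}^{\pm1}$.
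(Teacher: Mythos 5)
The proposal correctly identifies Gearhart--Pr\"uss plus a finite-dimensional variation-of-constants reduction as the skeleton of any proof, and, commendably, you flag the right obstacle: the ODE you obtain after substituting $w=\mathcal{H}u$ into $(\i t+\mathcal{A})u=f$ has coefficient $-\i tP_1^{-1}\mathcal{H}^{-1}$ (and $-P_1^{-1}P_0$), while $\Phi_t$ in the statement is defined through $\i tP_1^{-1}\mathcal{H}$. However, you do not resolve this mismatch --- you declare it the ``technical heart'' and move on. That is exactly where the proof lives, so what you have is a plan rather than an argument. Concretely, the step ``imposing the boundary condition turns the selection of $c$ into the linear system $T_tc=\ell_t(f)$'' is not justified: the variation-of-constants matrix you actually produce is not $\Phi_t(b)$, so the boundary matrix that naturally appears is not $T_t$. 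Absent the bridge, neither implication is established.

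For comparison, the paper does not do variation of constants directly on $\mathcal{A}$. It first removes $P_0$ by Trotter--Kato (your proposal never addresses $P_0$, and the ODE coefficient would then also contain $P_1^{-1}P_0$, further spoiling the match with $\Phi_t$), then passes through an explicit congruence (Lemma \ref{lem:basictransform}, Proposition \ref{prop:resolventestim}) to a reference operator $\mathcal{C}\subseteq D\partial_s$ on the unweighted $L_2(-\tfrac12,\tfrac12)^d$, with the Hamiltonian weight reappearing as a multiplication operator $\mathcal{O}$ perturbing the resolvent, i.e.\ one studies $(\i t\mathcal{O}+\mathcal{C})^{-1}$ rather than $(\i t+\mathcal{C})^{-1}$. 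The fundamental matrix $\Gamma_t$ of \emph{that} resolvent ODE has coefficient $\i tD\mathcal{O}$, and Proposition \ref{prop:fundamental_matrix} is what converts $\Gamma_t$ back into $\Phi_t$. It is precisely this generalised-resolvent reformulation --- not an integration by parts or duality trick on $(a,b)$ --- that reconciles the two ODEs; your proposal gestures at a ``direct reformulation'' without producing it. Finally, the (i)$\Rightarrow$(ii) direction (injectivity of $T_t$ plus the uniform bound on $T_t^{-1}$) is handled in the paper through the quantitative Lemma \ref{lem:S_t inverse} and a sequence/contradiction argument inside Theorem \ref{thm:main}; your one-line sketch (``feed in test functions designed to single out the $\Phi_tc$ contribution'') does not substitute for that and in particular does not explain why the constructed right-hand sides have controlled norm uniformly in $t$ (this is exactly where assumption \ref{eq:(B)} is used, via $\|\Gamma_t(\cdot)^{-1}\|=\|\Gamma_t(\cdot)\|$ from Lemma \ref{lem:S_t inverse}(a)).
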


To the best of our knowledge this is the first characterisation result
for exponential stability of port-Hamiltonian systems. Meanwhile,
however, that asymptotic stability has been characterised, though,
see \cite{WZ22}. Together with a set of examples warranting condition
\ref{eq:(B)}, \prettyref{thm:mainresultA} contains all the available
sufficient criteria for exponential stability of port-Hamiltonian
systems as respective special cases as we will demonstrate below.
In particular, we shall provide a condition on the connection of $\mathcal{H}$
and $P_{1}$ guaranteeing the satisfaction of \ref{eq:(B)} -- independently
of any regularity requirements for $\mathcal{H}$. A special case
for this setting is that of \emph{scalar-valued }Hamiltonian densities
$\mathcal{H},$ that is, when we find a bounded scalar function $h\colon(a,b)\to\mathbb{R}$
such that $\mathcal{H}(x)=h(x)1_{d}$ for almost every $x$. The corresponding
theorem characterising exponential stability is then, in fact, fairly
independent of $h$ in the following sense:
\begin{thm}
\label{thm:scalarH}Assume $\mathcal{H}$ to be scalar-valued and
that $\mathcal{A}$ given by \ref{eq:calA} with $P_{0}=0$ and \ref{eq:calAdom}
is $m$-dissipative. Then the following conditions are equivalent

\begin{enumerate}[(i)]

\item $\mathcal{A}$ generates an exponentially stable $C_{0}$-semi-group.

\item For all $t\in\R$ 
\[
\tau_{t}\coloneqq W\left(\begin{array}{c}
\e^{\i tP_{1}^{-1}}\\
1_{d}
\end{array}\right)
\]
is invertible with $\sup_{t\in\R}\|\tau_{t}^{-1}\|<\infty.$

\end{enumerate}
\end{thm}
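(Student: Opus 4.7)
The plan is to reduce the claim directly to \prettyref{thm:mainresultA} by exploiting that the scalar character of $\mathcal{H}$ renders the fundamental solutions $\Phi_{t}$ available in closed form. First I would compute $\Phi_{t}$ explicitly: since $\mathcal{H}(x)=h(x)1_{d}$ with $h$ real-valued, the matrices $\i t h(x)P_{1}^{-1}$ for different $x$ all commute, so the ODE $u'(x)=\i t h(x)P_{1}^{-1}u(x)$ with $\Phi_{t}(a)=1_{d}$ is solved by
\[
\Phi_{t}(x)=\exp\Bigl(\i t P_{1}^{-1}\int_{a}^{x}h(s)\d s\Bigr).
\]
Because $P_{1}=P_{1}^{\ast}$ is invertible and real symmetric, $\i t P_{1}^{-1}$ is skew-adjoint, so $\Phi_{t}(x)$ is unitary for every $t\in\R$ and every $x\in(a,b)$. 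In particular $\|\Phi_{t}\|_{\infty}=1$, and condition \ref{eq:(B)} is automatically fulfilled, which licenses the use of \prettyref{thm:mainresultA}.

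Next I set $\alpha\coloneqq\int_{a}^{b}h(s)\d s$. The standing assumption that $\mathcal{H}$ is strictly bounded away from zero yields $h\geq c_{0}>0$ almost everywhere, so $\alpha>0$. Substituting $x=b$ in the displayed formula gives
\[
\Phi_{t}(b)=\e^{\i t\alpha P_{1}^{-1}}=\e^{\i(t\alpha)P_{1}^{-1}},
\]
and therefore $T_{t}=\tau_{t\alpha}$ for every $t\in\R$. Since $t\mapsto t\alpha$ is a bijection of $\R$, the indexed families $\{T_{t}\}_{t\in\R}$ and $\{\tau_{s}\}_{s\in\R}$ coincide as sets of matrices; in particular $T_{t}$ is invertible for all $t$ with $\sup_{t\in\R}\|T_{t}^{-1}\|<\infty$ if and only if $\tau_{s}$ is invertible for all $s$ with $\sup_{s\in\R}\|\tau_{s}^{-1}\|<\infty$. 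Applying \prettyref{thm:mainresultA} now turns condition~(ii) of \prettyref{thm:mainresultA} into the present condition~(ii), giving the desired equivalence.

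The whole argument is short precisely because the scalar hypothesis collapses the product integral defining $\Phi_{t}$ into an ordinary matrix exponential; without it no such closed-form expression is available and the reduction would fail. There is thus no real obstacle once \prettyref{thm:mainresultA} is at hand; the only point demanding attention is the positivity $\alpha>0$, which is exactly where the coercivity assumption on $\mathcal{H}$ is used to ensure that the reparameterisation $s=t\alpha$ is a bijection of $\R$.
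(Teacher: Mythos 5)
Your proof is correct and takes essentially the same route as the paper: compute $\Phi_{t}$ in closed form (using that all $\i t h(x)P_{1}^{-1}$ commute), confirm condition \ref{eq:(B)}, then apply \prettyref{thm:mainresultA} and observe that $T_{t}=\tau_{t\alpha}$ with $\alpha=\int_{a}^{b}h>0$ so the rescaling $t\mapsto t\alpha$ is a bijection of $\R$. The only cosmetic difference is that you establish \ref{eq:(B)} directly by noting that $\i tP_{1}^{-1}\int_{a}^{x}h$ is skew-adjoint, hence $\Phi_{t}(x)$ is unitary, whereas the paper routes this through \prettyref{exa:scalarH} and the commutativity criterion \prettyref{thm:dood}; both amount to the same norm-preservation argument for the fundamental solution.
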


The sufficient criterion \prettyref{thm:suffcrit} for scalar-valued
$\mathcal{H}$ reads as follows.
\begin{thm}
\label{thm:suffcrit-3}Let $\mathcal{H}$ be scalar-valued and $\mathcal{A}$
given by \ref{eq:calA} and \ref{eq:calAdom} is $m$-dissipative.
If there exist $c>0$ and $\eta\in\{a,b\}$ such that for all $u\in\dom(\mathcal{A})$
\[
\langle u,\mathcal{A}u\rangle_{H}\leq-c\|(\mathcal{H}u)(\eta)\|^{2},
\]
then $\mathcal{A}$ generates an exponentially stable $C_{0}$-semi-group.
\end{thm}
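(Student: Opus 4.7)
The plan is to derive the statement from \prettyref{thm:scalarH}. Since $\mathcal{H}$ is scalar, condition \ref{eq:(B)} holds automatically (as announced in the discussion preceding \prettyref{thm:mainresultA}), so what remains is to verify condition~(ii) of \prettyref{thm:scalarH}, namely that $\tau_{t}$ is invertible for every $t\in\R$ with $\sup_{t\in\R}\|\tau_{t}^{-1}\|<\infty$.

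First I would reformulate the real-part hypothesis as a condition on $\ker W_{B}\subseteq\C^{2d}$. Integration by parts, using $P_{0}^{*}=-P_{0}$ and $P_{1}^{*}=P_{1}$, gives
\[
\Re\langle u,\mathcal{A}u\rangle_{H}=\tfrac{1}{2}\bigl(\langle \mathcal{H}u(b),P_{1}\mathcal{H}u(b)\rangle-\langle \mathcal{H}u(a),P_{1}\mathcal{H}u(a)\rangle\bigr)
\]
for every $u\in\dom(\mathcal{A})$. Any pair $(w_{1},w_{2})\in\ker W_{B}$ is realised as $(\mathcal{H}u(b),\mathcal{H}u(a))$ by setting $u=\mathcal{H}^{-1}v$, where $v$ is the affine interpolation from $w_{2}$ at $a$ to $w_{1}$ at $b$; this lift lies in $\dom(\mathcal{A})$ because $\mathcal{H}$ is bounded above and away from zero. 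Consequently, the hypothesis is equivalent to
\[
\tfrac{1}{2}\bigl(\langle w_{1},P_{1}w_{1}\rangle-\langle w_{2},P_{1}w_{2}\rangle\bigr)\geq c\|w_{\eta}\|^{2}\quad\text{for all }(w_{1},w_{2})\in\ker W_{B},
\]
with $w_{\eta}=w_{1}$ if $\eta=b$ and $w_{\eta}=w_{2}$ if $\eta=a$.

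I would then exploit that $U_{t}\coloneqq\e^{\i tP_{1}^{-1}}$ is unitary (since $P_{1}^{-1}$ is self-adjoint) and commutes with $P_{1}$; hence $\langle U_{t}v,P_{1}U_{t}v\rangle=\langle v,P_{1}v\rangle$ for every $v\in\C^{d}$, so the boundary form vanishes on every pair $(U_{t}v,v)$. If $\tau_{t}v=0$, then $(U_{t}v,v)\in\ker W_{B}$, the inequality above forces $w_{\eta}=0$, and invertibility of $U_{t}$ and $1_{d}$ then yields $v=0$; this establishes pointwise invertibility of $\tau_{t}$.

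The main obstacle is to promote this pointwise statement to a uniform bound, which I plan to handle by compactness. Suppose, for contradiction, that $\|\tau_{t_{n}}^{-1}\|\to\infty$ along some sequence $(t_{n})\subseteq\R$, and choose unit vectors $v_{n}\in\C^{d}$ with $\tau_{t_{n}}v_{n}\to0$. Since the unitary group $U(d)$ and the unit sphere of $\C^{d}$ are compact, we may pass to a subsequence along which $U_{t_{n}}\to U\in U(d)$ and $v_{n}\to v$ with $\|v\|=1$. Commutation with $P_{1}$ being a closed condition, $U$ still commutes with $P_{1}$, and passage to the limit yields $W_{B}\bigl(\begin{smallmatrix}Uv\\ v\end{smallmatrix}\bigr)=0$. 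Repeating the previous paragraph's argument with $U$ in place of $U_{t}$ forces $v=0$, contradicting $\|v\|=1$. Applying \prettyref{thm:scalarH} then concludes the proof.
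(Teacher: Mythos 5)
Your proof is correct, and it takes a genuinely different -- and in some ways more elementary -- route than the paper. The paper's own proof is a one-liner: it cites \prettyref{exa:scalarH} (which gives condition \ref{eq:(B)} for scalar $\mathcal{H}$) and invokes the general sufficiency criterion \prettyref{thm:suffcrit-1}; the latter is established via the congruent operator $\mathcal{C}$, the translation of the real-part condition into an inequality on the boundary matrix $M$ (\prettyref{prop:ReCIneqM}), and the unitary-factorisation structure of $\Gamma_t(\tfrac{1}{2})$ (\prettyref{prop:GtQ}). You bypass that machinery: you turn the real-part hypothesis directly into a definite quadratic constraint on $\ker W_B\subseteq\C^{2d}$ (integration by parts plus the affine-interpolation lift to realise arbitrary kernel vectors as boundary data of admissible $u$), note that $U_t=\e^{\i t P_1^{-1}}$ is unitary and commutes with $P_1$ so that the boundary form $\tfrac12(\langle w_1,P_1w_1\rangle-\langle w_2,P_1w_2\rangle)$ vanishes on every pair $(U_tv,v)$, and obtain the uniform bound by compactness of $U(d)$ and of the unit sphere. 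This gives a self-contained verification of condition (ii) of \prettyref{thm:scalarH}. Two remarks. First, the compactness step quietly relies on the limit $U$ of $(U_{t_n})$ being invertible -- this is automatic because $U(d)$ is a closed (indeed compact) subgroup, but it is worth flagging, since invertibility of $U$ is what lets you conclude $v=0$ from $Uv=0$ when $\eta=b$. Second, the invariance $\langle\Phi_t(b)v,P_1\Phi_t(b)v\rangle=\langle v,P_1v\rangle$ that you derive from commutativity actually holds for \emph{general} $\mathcal{H}$: it is the conservation law $\partial_x\langle\Phi_t(x)y,P_1\Phi_t(x)y\rangle=-2t\,\Im\langle\Phi_t(x)y,\mathcal{H}(x)\Phi_t(x)y\rangle=0$, equivalently $\Phi_t(x)^*P_1\Phi_t(x)=P_1$, whence also $\Phi_t(x)^{-1}=P_1^{-1}\Phi_t(x)^*P_1$ is uniformly bounded under \ref{eq:(B)}. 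So a version of your compactness argument (with the compactness of $U(d)$ replaced by boundedness under \ref{eq:(B)} together with boundedness of the inverses) would also yield \prettyref{thm:suffcrit-1} itself, which is a nice observation, even though it is not needed for the present statement and the paper takes a different, more quantitative route there.
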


We will demonstrate below that the  condition in \cite[Theorem 3.5]{JSchmid}
implies (ii) from \prettyref{thm:mainresultA}, thus providing an
independent proof of \cite[Theorem 3.5]{JSchmid} in a more general
situation. By means of counterexamples, we will show that the invertibility
of $T_{t}$ is not sufficient for the uniform boundedness of $T_{t}^{-1}.$
It will rely on future research to assess whether condition \ref{eq:(B)}
is needed at all in the present analysis. However, as we will illustrate
in Section 6, it is satisfied under very mild conditions on the operators
involved, which covers most (if not all) systems considered in the
literature so far. Nevertheless, from a mathematical point of view,
it is still interesting to study the necessity of condition \ref{eq:(B)}.
This leads us to two open problems:
\begin{problem}
Characterise all $P_{1}$ and $\mathcal{H}$ such that \ref{eq:(B)}
holds.
\end{problem}

It may well be that \ref{eq:(B)} is not needed altogether:
\begin{problem}
Is \ref{eq:(B)} necessary for exponential stability of the port-Hamiltonian
system at hand?
\end{problem}

We will provide a short outline of the manuscript next. We recall
some of the basic results related to port-Hamiltonian systems relevant
to this article in \prettyref{sec:Preliminaries}. More so, we shall
revisit the generation theorem for port-Hamiltonian systems and rephrase
the same in order to have a better fit to the rationale to follow.
The main result together with its proof is then presented in \prettyref{sec:Characterisation-of-Exponential}.
In \prettyref{sec:FirstApplications}, we specialise the result to
constant energy densities $\mathcal{H}$ and provide some examples
and non-examples. Particularly, we apply our characterisation to \cite[Section 5]{Engel2013}.
\prettyref{sec:A-Sufficient-Criterion} is devoted to frame the already
known positive definiteness type condition at the boundary into the
present setting. More precisely, we shall show that the  criterion
from \prettyref{thm:suffcrit} can be derived from our characterisation
in \prettyref{sec:A-Sufficient-Criterion}. \prettyref{sec:CondB}
is devoted to a discussion of \ref{eq:(B)}. We sum up our findings
in the concluding \prettyref{sec:Conclusion}.

\section{Generation Theorem Revisited\label{sec:Preliminaries}}

In this section, we recall the functional analytic setting of port-Hamiltonian
systems and detail some results from the literature characterising
the generation property of $\mathcal{A}$. Moreover, we slightly reformulate
said generation theorem into a form more suitable for our purposes.
To start out with we fix $d\in\mathbb{N}$ and let $P_{1},P_{0}\in\R^{d\times d}$
be matrices such that $P_{1}=P_{1}^{\ast}$ is invertible and $P_{0}=-P_{0}^{\ast}$.
Moreover, let $\mathcal{H}:(a,b)\to\R^{d\times d}$ be a measurable
function such that
\[
\exists m,M>0\:\forall x\in(a,b):\:m\leq\mathcal{H}(x)=\mathcal{H}(x)^{\ast}\le M,
\]
where the inequalities are meant in the sense of positive definiteness.
We use $\mathcal{H}$ to define a new inner product on $L_{2}(a,b)^{d}$
by setting 
\[
\langle u,v\rangle_{H}\coloneqq\langle\mathcal{H}u,v\rangle_{L_{2}(a,b)^{d}}\quad(u,v\in L_{2}(a,b)^{d})
\]
and denote the Hilbert space $L_{2}(a,b)^{d}$ equipped with this
new inner product by $H$. It follows that $H=L_{2}([a,b];\R^{d})$
is equipped with the norm 
\[
\|u\|_{H}\coloneqq\|\mathcal{H}^{\frac{1}{2}}u\|_{L_{2}(a,b;\R^{d})}.
\]
 Finally, we define the \emph{port-Hamiltonian} (operator) 
\begin{align}
\mathcal{A}:\dom(\mathcal{A})\subseteq H & \to H,\nonumber \\
u & \mapsto P_{1}(\mathcal{H}u)'+P_{0}\mathcal{H}u,\label{eq:pHo}
\end{align}
with a suitable domain $\dom(\mathcal{A})$ satisfying
\[
\dom(\partial_{0}\mathcal{H})\subseteq\dom(A)\subseteq\dom(\partial\mathcal{H}),
\]
where $\partial_{0}$ denotes the distributional derivative on $L_{2}(a,b)^{d}$
with domain $H_{0}^{1}(a,b)^{d}$. We recall the following characterisation
for $m$-dissipativity of port-Hamiltonians:
\begin{thm}[{\cite[Theorem 7.2.4]{JZ}}]
\label{thm:GenContr}Let $\mathcal{A}$ be as in \prettyref{eq:pHo}.
Then $\mathcal{A}$ is $m$-dissipative (and hence, $\mathcal{A}$
generates a contraction semi-group) if and only if there exists a
matrix $W\in\R^{d\times2d}$ with $\rk W=d$ and 
\begin{equation}
W\left(\begin{array}{cc}
P_{1} & -P_{1}\\
1_{d} & 1_{d}
\end{array}\right)^{-1}\left(\begin{array}{cc}
0 & 1_{d}\\
1_{d} & 0
\end{array}\right)\left(W\left(\begin{array}{cc}
P_{1} & -P_{1}\\
1_{d} & 1_{d}
\end{array}\right)^{-1}\right)^{\ast}\geq0\label{eq:posdefWB}
\end{equation}
such that 
\begin{equation}
\dom(\mathcal{A})=\left\{ u\in H\,;\,\mathcal{H}u\in H^{1}(a,b)^{d},\,W\left(\begin{array}{c}
\left(\mathcal{H}u\right)(b)\\
\left(\mathcal{H}u\right)(a)
\end{array}\right)=0\right\} .\label{eq:domain_of_A}
\end{equation}
\end{thm}

The desired reformulation of \prettyref{thm:GenContr} requires some
prerequisites. For this, note since $P_{1}$ is self-adjoint and invertible,
we can decompose the space $\R^{d}$ into $\R^{d}=E_{+}\oplus E_{-}$
with 
\begin{align*}
E_{+} & \coloneqq\Span\{v\in\R^{d}\,;\,\exists\lambda>0:P_{1}v=\lambda v\},\\
E_{-} & \coloneqq\Span\{w\in\R^{d}\,;\,\exists\lambda<0:P_{1}w=\lambda w\}.
\end{align*}
We denote by $\iota_{\pm}\colon E_{\pm}\to\R^{d}$ the canonical embeddings.
Note that $P_{\pm}\coloneqq\iota_{\pm}\iota_{\pm}^{\ast}:\R^{d}\to\R^{d}$
is then the orthogonal projection on $E_{\pm}$ and $\iota_{\pm}^{\ast}\iota_{\pm}\colon E_{\pm}\to E_{\pm}$
is just the identity. Moreover, we set 
\begin{align*}
P_{1}^{+} & \coloneqq\iota_{+}^{\ast}P_{1}\iota_{+}\colon E_{+}\to E_{+}\\
P_{1}^{-} & \coloneqq\iota_{-}^{\ast}(-P_{1})\iota_{-}\colon E_{-}\to E_{-}.
\end{align*}
Note that $P_{1}^{+}$ and $P_{1}^{-}$ are both strictly positive
self-adjoint operators. Moreover, we set 
\begin{align*}
Q_{+} & \coloneqq\iota_{+}\left(P_{1}^{+}\right)^{\frac{1}{2}}\iota_{+}^{\ast}\colon\R^{d}\to\R^{d},\\
Q_{-} & \coloneqq\iota_{-}\left(P_{1}^{-}\right)^{\frac{1}{2}}\iota_{-}^{\ast}\colon\R^{d}\to\R^{d}.
\end{align*}
We equip the spaces $E_{+}$ and $E_{-}$ with the norms 
\begin{align*}
\|x\|_{E_{+}} & \coloneqq\|\iota_{+}\left(P_{1}^{+}\right)^{-\frac{1}{2}}x\|_{\R^{d}},\\
\|y\|_{E_{-}} & \coloneqq\|\iota_{-}\left(P_{1}^{-}\right)^{-\frac{1}{2}}y\|_{\R^{d}}.
\end{align*}
The next lemma is a standard fact from linear algebra.
\begin{lem}
\label{lem:fun_with_marices}Let $W,\tilde{W}\in\R^{d\times2d}$ such
that $\rk W=d$ and $\ker W=\ker\tilde{W}.$ Then there exists $K\in\R^{d\times d}$
invertible with 
\[
W=K\tilde{W}.
\]
\end{lem}

\begin{proof}
Since $\ker W=\ker\tilde{W}$ and $\dim\ker W=d,$ we infer that both
$W$ and $\tilde{W}$ are onto. Hence, the mappings 
\[
W_{1}\colon\ker(W)^{\bot}\to\R^{d}\text{ and }\tilde{W}_{1}\colon\ker(W)^{\bot}\to\R^{d}
\]
 are bijections. We set $K\coloneqq W_{1}\tilde{W}_{1}^{-1}$ and
obtain
\[
Wu=W_{1}P_{\left(\ker W\right)^{\bot}}u=K\tilde{W}_{1}P_{(\ker W)^{\bot}}u=K\tilde{W}u
\]
for each $u\in\R^{d}$. 
\end{proof}
\begin{lem}
\label{lem:W_vs_M}Let $W\in\R^{d\times2d}.$ Then the following statements
are equivalent

\begin{enumerate}[(i)]

\item $W$ has rank $d$ and satisfies 
\begin{equation}
W\left(\begin{array}{cc}
P_{1} & -P_{1}\\
1 & 1
\end{array}\right)^{-1}\left(\begin{array}{cc}
0 & 1\\
1 & 0
\end{array}\right)\left(W\left(\begin{array}{cc}
P_{1} & -P_{1}\\
1 & 1
\end{array}\right)^{-1}\right)^{\ast}\geq0.\label{eq:pos_def}
\end{equation}

\item There exists a matrix $M\in\R^{d\times d}$ with $\|M\|\leq1$
and an invertible matrix $K\in\R^{d\times d}$ such that 
\[
W=K\left(\begin{array}{cc}
Q_{+}-MQ_{-} & Q_{-}-MQ_{+}\end{array}\right).
\]

\end{enumerate}

Moreover, $\|M\|<1$ is equivalent to strict positive definiteness
in \prettyref{eq:pos_def}.
\end{lem}

\begin{proof}
(i) $\Rightarrow$ (ii): We write $W=\left(\begin{array}{cc}
W_{1} & W_{2}\end{array}\right)$ with $W_{1},W_{2}\in\R^{d\times d}.$ An easy calculation reveals
that \prettyref{eq:pos_def} is equivalent to $W_{2}P_{1}^{-1}W_{2}^{\ast}\leq W_{1}P_{1}^{-1}W_{1}^{\ast}$.
We consider now the adjoint mapping $W^{\ast}\colon\R^{d}\to\R^{2d}.$
For $z=(x,y)\in\ran W^{\ast}$, we find $u\in\R^{d}$ such that $x=W_{1}^{\ast}u$
and $y=W_{2}^{\ast}u.$ The latter gives 
\begin{align*}
\|\iota_{+}^{\text{\ensuremath{\ast}}}y\|_{E_{+}}^{2}-\|\iota_{-}^{\ast}y\|_{E_{-}}^{2} & =\langle\iota_{+}\left(P_{1}^{+}\right)^{-1}\iota_{+}^{\ast}y,y\rangle_{\R^{d}}-\langle\iota_{-}\left(P_{1}^{-}\right)^{-1}\iota_{-}^{\ast}y,y\rangle_{\R^{d}}\\
 & =\langle P_{1}^{-1}y,y\rangle_{\R^{d}}\\
 & =\langle P_{1}^{-1}W_{2}^{\ast}u,W_{2}^{\ast}u\rangle_{\R^{d}}\\
 & =\langle W_{2}P_{1}^{-1}W_{2}^{\ast}u,u\rangle_{\R^{d}}\\
 & \leq\langle W_{1}P_{1}^{-1}W_{1}^{\ast}u,u\rangle_{\R^{d}}\\
 & =\langle P_{1}^{-1}x,x\rangle_{\R^{d}}\\
 & =\|\iota_{+}^{\text{\ensuremath{\ast}}}x\|_{E_{+}}^{2}-\|\iota_{-}^{\ast}x\|_{E_{-}}^{2}
\end{align*}
and thus, 
\begin{equation}
\|\iota_{+}^{\text{\ensuremath{\ast}}}y\|_{E_{+}}^{2}+\|\iota_{-}^{\ast}x\|_{E_{-}}^{2}\leq\|\iota_{+}^{\text{\ensuremath{\ast}}}x\|_{E_{+}}^{2}+\|\iota_{-}^{\ast}y\|_{E_{-}}^{2}.\label{eq:estimate_W^ast-1}
\end{equation}

Next, consider the mapping 
\[
S\coloneqq\left(\begin{array}{cc}
P_{+} & P_{-}\end{array}\right)W^{\ast}\colon\R^{d}\to\R^{d},\quad u\mapsto P_{+}W_{1}^{\ast}u+P_{-}W_{2}^{\ast}u.
\]
This mapping is linear and one-to-one and hence, a bijection. Indeed,
if $Su=P_{+}W_{1}^{\ast}u+P_{-}W_{2}^{\ast}u=0,$ then $P_{+}W_{1}^{\ast}u=0=P_{-}W_{2}^{\ast}u.$
By \prettyref{eq:estimate_W^ast-1} it follows that $P_{+}W_{2}^{\ast}u=0=P_{-}W_{1}^{\ast}u$
and consequently, $W^{\ast}u=0.$ Since $W^{\ast}$ has rank $d$,
it is one-to-one and thus, $u=0.$ We now define the mapping 
\[
C\colon\R^{d}\to\R^{d},\quad v\mapsto\left(\begin{array}{cc}
P_{-} & P_{+}\end{array}\right)W^{\ast}S^{-1}v
\]
and set 
\[
M\coloneqq-\left(Q_{+}+Q_{-}\right)C^{\ast}\left(\iota_{+}\left(P_{1}^{+}\right)^{-\frac{1}{2}}\iota_{+}^{\ast}+\iota_{-}\left(P_{1}^{-}\right)^{-\frac{1}{2}}\iota_{-}^{\ast}\right).
\]
We now prove that the matrices $W$ and $\tilde{W}\coloneqq\left(\begin{array}{cc}
Q_{+}-MQ_{-} & Q_{-}-MQ_{+}\end{array}\right)$ have the same kernels. Indeed, 
\begin{align*}
(x,y)\in\ker W & \Leftrightarrow(x,y)\in\left(\ran W^{\ast}\right)^{\bot},\\
 & \Leftrightarrow\forall u\in\R^{d}:\langle x,W_{1}^{\ast}u\rangle+\langle y,W_{2}^{\ast}u\rangle=0,\\
 & \Leftrightarrow\forall u\in\R^{d}:\langle P_{-}x,P_{-}W_{1}^{\ast}u\rangle+\langle P_{-}y,P_{-}W_{2}^{\ast}u\rangle+\langle P_{+}x,P_{+}W_{1}^{\ast}u\rangle+\langle P_{+}y,P_{+}W_{2}^{\ast}u\rangle=0,\\
 & \Leftrightarrow\forall u\in\R^{d}:\langle P_{-}x+P_{+}y,P_{-}W_{1}^{\ast}u+P_{+}W_{2}^{\ast}u\rangle+\langle P_{+}x+P_{-}y,P_{+}W_{1}^{\ast}u+P_{-}W_{2}^{\ast}u\rangle=0,\\
 & \Leftrightarrow\forall u\in\R^{d}:\langle P_{-}x+P_{+}y,CSu\rangle+\langle P_{+}x+P_{-}y,Su\rangle=0,\\
 & \Leftrightarrow\forall u\in\R^{d}:\langle P_{+}x+P_{-}y+C^{\ast}(P_{-}x+P_{+}y),Su\rangle=0,\\
 & \Leftrightarrow-C^{\ast}(P_{-}x+P_{+}y)=P_{+}x+P_{-}y\\
 & \Leftrightarrow-(Q_{+}+Q_{-})C^{\ast}(P_{-}x+P_{+}y)=Q_{+}x+Q_{-}y\\
 & \Leftrightarrow M\left(Q_{+}y+Q_{-}x\right)=Q_{+}x+Q_{-}y,
\end{align*}
implying
\[
(x,y)\in\ker W\Leftrightarrow\left(Q_{+}-MQ_{-}\right)x+\left(Q_{-}-MQ_{+}\right)y=0;
\]
i.e. $\ker W=\ker\tilde{W}.$ Employing \prettyref{lem:fun_with_marices}
we find $K\in\R^{d\times d}$ invertible such that 
\[
W=K\tilde{W}=K\left(\begin{array}{cc}
Q_{+}-MQ_{-} & Q_{-}-MQ_{+}\end{array}\right).
\]
It remains to show that $\|M\|\leq1.$ For this, let $v\in\R^{d}$.
Since $S$ is onto, we find $u\in\R^{d}$ such that 
\[
Q_{+}v+Q_{-}v=Su=P_{+}W_{1}^{\ast}u+P_{-}W_{2}^{\ast}u=P_{+}x+P_{-}y,
\]
where we set $(x,y)\coloneqq W^{\ast}u\in\ran W^{\ast}$; thus, $Q_{-}v=P_{-}y$
and $Q_{+}v=P_{+}x.$ We compute, using \prettyref{eq:estimate_W^ast-1}
for $(x,y)$
\begin{align*}
\|M^{\ast}v\|^{2} & =\|\left(\iota_{+}\left(P_{1}^{+}\right)^{-\frac{1}{2}}\iota_{+}^{\ast}+\iota_{-}\left(P_{1}^{-}\right)^{-\frac{1}{2}}\iota_{-}^{\ast}\right)C(Q_{+}v+Q_{-}v)\|^{2}\\
 & =\|\left(\iota_{+}\left(P_{1}^{+}\right)^{-\frac{1}{2}}\iota_{+}^{\ast}+\iota_{-}\left(P_{1}^{-}\right)^{-\frac{1}{2}}\iota_{-}^{\ast}\right)CSu\|^{2}\\
 & =\|\left(\iota_{+}\left(P_{1}^{+}\right)^{-\frac{1}{2}}\iota_{+}^{\ast}+\iota_{-}\left(P_{1}^{-}\right)^{-\frac{1}{2}}\iota_{-}^{\ast}\right)(P_{-}x+P_{+}y)\|^{2}\\
 & =\|\iota_{+}^{\ast}y\|_{E_{+}}^{2}+\|\iota_{-}^{\ast}x\|_{E_{-}}^{2}\\
 & \leq\|\iota_{+}^{\ast}x\|_{E_{+}}^{2}+\|\iota_{-}^{\ast}y\|_{E_{-}}^{2}\\
 & =\|\iota_{+}(P_{1}^{+})^{-\frac{1}{2}}\iota_{+}^{\ast}x+\iota_{-}(P_{1}^{-})^{-\frac{1}{2}}\iota_{-}^{\ast}y\|^{2}\\
 & =\|\iota_{+}(P_{1}^{+})^{-\frac{1}{2}}\iota_{+}^{\ast}Q_{+}v+\iota_{-}(P_{1}^{-})^{-\frac{1}{2}}\iota_{-}^{\ast}Q_{-}v\|^{2}\\
 & =\|P_{+}v+P_{-}v\|^{2}=\|v\|^{2},
\end{align*}
which yields $\|M\|=\|M^{\ast}\|\leq1.$\\
(ii) $\Rightarrow$ (i): Assume 
\[
W=K\left(\begin{array}{cc}
Q_{+}-MQ_{-} & Q_{-}-MQ_{+}\end{array}\right)
\]
 for $K,M\in\R^{d\times d}$ with $\|M\|\leq1$ and $K$ invertible.
We show that $W$ has rank $d$. Since $K$ is invertible, it suffices
to show that $\tilde{W}\coloneqq\left(\begin{array}{cc}
Q_{+}-MQ_{-} & Q_{-}-MQ_{+}\end{array}\right)$ has rank $d$, which in turn is equivalent to $\ker\left(\tilde{W}\right)^{\ast}=\{0\}.$
So, let $u\in\ker(\tilde{W})^{\ast}$; that is, 
\[
Q_{+}u=Q_{-}M^{\ast}u,\quad Q_{-}u=Q_{+}M^{\ast}u.
\]
Since $Q_{-}$ and $Q_{+}$ attain values in $E_{-}$ and $E_{+}$,
respectively, we infer $Q_{-}u=Q_{+}u=0$ and hence $P_{-}u=P_{+}u=0,$
which imply $u=0.$ It remains to show \prettyref{eq:pos_def}. As
shown above, this is equivalent to \prettyref{eq:estimate_W^ast-1}.
So let $(x,y)=W^{\ast}u$ for some $u\in\R^{d}$; that is 
\[
x=\left(Q_{+}-Q_{-}M^{\ast}\right)K^{\ast}u,\quad y=\left(Q_{-}-Q_{+}M^{\ast}\right)K^{\ast}u.
\]
Then we compute
\begin{align*}
\|\iota_{+}^{\text{\ensuremath{\ast}}}y\|_{E_{+}}^{2}+\|\iota_{-}^{\ast}x\|_{E_{-}}^{2} & =\|\iota_{+}\left(P_{1}^{+}\right)^{-\frac{1}{2}}\iota_{+}^{\ast}y\|_{\R^{d}}^{2}+\|\iota_{-}\left(P_{1}^{-}\right)^{-\frac{1}{2}}\iota_{-}^{\ast}x\|_{\R^{d}}^{2}\\
 & =\|P_{+}M^{\ast}K^{\ast}u\|_{\R^{d}}^{2}+\|P_{-}M^{\ast}K^{\ast}u\|_{\R^{d}}^{2}\\
 & =\|M^{\ast}K^{\ast}u\|_{\R^{d}}^{2}\\
 & \leq\|K^{\ast}u\|_{\R^{d}}^{2}\\
 & =\|P_{+}K^{\ast}u\|_{\R^{d}}^{2}+\|P_{-}K^{\ast}u\|_{\R^{d}}^{2}\\
 & =\|\iota_{+}\left(P_{1}^{+}\right)^{-\frac{1}{2}}\iota_{+}^{\ast}x\|_{\R^{d}}^{2}+\|\iota_{-}\left(P_{1}^{-}\right)^{-\frac{1}{2}}\iota_{-}^{\ast}y\|_{\R^{d}}^{2}\\
 & =\|\iota_{+}^{\ast}x\|_{E_{+}}^{2}+\|\iota_{-}^{\ast}y\|_{E_{-}}^{2}.
\end{align*}
For the final claim, we note that the strict positive definiteness
in \prettyref{eq:pos_def} is equivalent to 
\[
\|\iota_{+}^{\text{\ensuremath{\ast}}}y\|_{E_{+}}^{2}+\|\iota_{-}^{\ast}x\|_{E_{-}}^{2}<\|\iota_{+}^{\text{\ensuremath{\ast}}}x\|_{E_{+}}^{2}+\|\iota_{-}^{\ast}y\|_{E_{-}}^{2}\quad((x,y)\in\ran W^{\ast}\setminus\{0\}),
\]
which by the computations above is equivalent to $\|M\|=\|M^{\ast}\|<1.$ 
\end{proof}
Using the latter lemma, we obtain the following characterisation result
for $\mathcal{A}$ generating a contraction semi-group.
\begin{thm}
\label{thm:m-accretive-pH}The following statements are equivalent:

\begin{enumerate}[(i)]

\item $\mathcal{A}$ is m-dissipative,

\item $\mathcal{A}$ is dissipative and there exists $W\in\R^{d\times2d}$
such that 
\[
\dom(\mathcal{A})=\left\{ u\in H\,;\,\mathcal{H}u\in H^{1}([a,b];\R^{d}),\,W\left(\begin{array}{c}
(\mathcal{H}u)(b)\\
(\mathcal{H}u)(a)
\end{array}\right)=0\right\} ,
\]

\item there exists $W\in\R^{d\times2d}$ with maximal rank, such
that 
\begin{equation}
W\left(\begin{array}{cc}
P_{1} & -P_{1}\\
1 & 1
\end{array}\right)^{-1}\left(\begin{array}{cc}
0 & 1\\
1 & 0
\end{array}\right)\left(W\left(\begin{array}{cc}
P_{1} & -P_{1}\\
1 & 1
\end{array}\right)^{-1}\right)^{\ast}\geq0\label{eq:positive_W}
\end{equation}
and 
\[
\dom(\mathcal{A})=\left\{ u\in H\,;\,\mathcal{H}u\in H^{1}([a,b];\R^{d}),\,W\left(\begin{array}{c}
(\mathcal{H}u)(b)\\
(\mathcal{H}u)(a)
\end{array}\right)=0\right\} ,
\]

\item There exists a matrix $M\in\R^{d\times d}$ with $\|M\|\leq1$
such that 
\[
\dom(\mathcal{A})=\left\{ u\in H\,;\,\mathcal{H}u\in H^{1}([a,b];\R^{d}),\:Q_{-}(\mathcal{H}u)(a)+Q_{+}(\mathcal{H}u)(b)=M\left(Q_{-}(\mathcal{H}u)(b)+Q_{+}(\mathcal{H}u)(a)\right)\right\} .
\]

\end{enumerate}Moreover, in case (iv) we have 
\[
\langle\mathcal{A}u,u\rangle_{H}=\frac{1}{2}\left(\|M\left(Q_{-}(\mathcal{H}u)(b)+Q_{+}(\mathcal{H}u)(a)\right)\|^{2}-\|Q_{-}\left(\mathcal{H}u\right)(b)+Q_{+}\left(\mathcal{H}u\right)(a)\|^{2}\right)
\]
for each $u\in\dom(\mathcal{A}).$ Furthermore, the matrix $W$ in
(ii) or (iii) can be expressed in terms of the matrix $M$ in (iv)
by 
\[
W=K\left(\begin{array}{cc}
Q_{+}-MQ_{-} & Q_{-}-MQ_{+}\end{array}\right)
\]
for some invertible matrix $K\in\R^{d\times d}.$
\end{thm}

\begin{proof}
The equivalence of statements (i)-(iii) is well-known, cf. \cite[Theorem 1]{JMZ_2015}
and \cite[Theorem 7.2.4]{JZ}. Moreover, the equivalence of (iii)
and (iv) and the relation of the matrices $W$ and $M$ follows from
\prettyref{lem:W_vs_M}. The only thing to be shown is the formula
for $\langle\mathcal{A}u,u\rangle_{H}.$ So, let $u\in\dom(\mathcal{A})$.
Then
\begin{align*}
2\langle\mathcal{A}u,u\rangle_{H} & =2\langle P_{1}\partial_{1}\mathcal{H}u,\mathcal{H}u\rangle+2\langle P_{0}\mathcal{H}u,\mathcal{H}u\rangle\\
 & =2\langle P_{1}\partial_{1}\mathcal{H}u,\mathcal{H}u\rangle\\
 & =\langle P_{1}\left(\mathcal{H}u\right)(b),\left(\mathcal{H}u\right)(b)\rangle-\langle P_{1}\left(\mathcal{H}u\right)(a),\left(\mathcal{H}u\right)(a)\rangle\\
 & =\langle Q_{+}^{2}(\mathcal{H}u)(b),\left(\mathcal{H}u\right)(b)\rangle-\langle Q_{-}^{2}(\mathcal{H}u)(b),\left(\mathcal{H}u\right)(b)\rangle\\
 & \quad-\langle Q_{+}^{2}\left(\mathcal{H}u\right)(a),\left(\mathcal{H}u\right)(a)\rangle+\langle Q_{-}^{2}\left(\mathcal{H}u\right)(a),\left(\mathcal{H}u\right)(a)\rangle\\
 & =\|Q_{+}\left(\mathcal{H}u\right)(b)+Q_{-}\left(\mathcal{H}u\right)(a)\|^{2}-\|Q_{+}\left(\mathcal{H}u\right)(a)+Q_{-}\left(\mathcal{H}u\right)(b)\|^{2}\\
 & =\|M\left(Q_{+}\left(\mathcal{H}u\right)(a)+Q_{-}\left(\mathcal{H}u\right)(b)\right)\|^{2}-\|Q_{+}\left(\mathcal{H}u\right)(a)+Q_{-}\left(\mathcal{H}u\right)(b)\|^{2}
\end{align*}
which shows the assertion.
\end{proof}
\begin{rem}
The anonymous referee kindly provided us with an alternative proof
for the equivalence of (i) and (iv) in \prettyref{thm:m-accretive-pH},
using theory of port-Hamiltonian systems instead of \prettyref{lem:W_vs_M}.
We sketch this proof as follows: By \prettyref{thm:GenContr} item
(i) in \prettyref{thm:m-accretive-pH} is equivalent to $\mathcal{A}$
with domain as in \prettyref{eq:domain_of_A} being a generator of
a contraction semigroup. By unitary equivalence one can assume that
$\mathcal{H}=1_{d}$ (see \cite[Lemma 7.2.3]{JZ} or \prettyref{lem:basictransform}
below). Again by unitary equivalence it suffices to treat the case
$P_{1}=\left(\begin{array}{cc}
\Lambda & 0\\
0 & \Theta
\end{array}\right)=\left(\begin{array}{cc}
P_{1}^{+} & 0\\
0 & -P_{1}^{-}
\end{array}\right)$. In this setup one can apply \cite[Theorem 13.3.1]{JZ}. Indeed,
$\tilde{\mathcal{A}}u=P_{1}u'$ generates a $C_{0}$-semigroup on
$L_{2}(a,b)^{d}$ if and only if its domain is given by 
\[
\dom(\tilde{\mathcal{A}})=\left\{ u\in H^{1}(a,b)^{d}\,;\,KP_{1}\left(\begin{array}{c}
u^{+}(b)\\
u^{-}(a)
\end{array}\right)+LP_{1}\left(\begin{array}{c}
u^{+}(a)\\
u^{-}(b)
\end{array}\right)=0\right\} ,
\]
for some matrices $K,L,$ where $K$ is invertible. Hence, (i) is
equivalent to $\tilde{\mathcal{A}}$ with domain as above being dissipative
(note that an operator is m-dissipative if and only if it is dissipative
and generates a $C_{0}$-semigroup). By invertibility of $K$ the
boundary condition for $\tilde{\mathcal{A}}$ can be rewritten as
\[
(Q_{+}+Q_{-})\left(\begin{array}{c}
u^{+}(b)\\
u^{-}(a)
\end{array}\right)=\sqrt{|P_{1}|}\left(\begin{array}{c}
u^{+}(b)\\
u^{-}(a)
\end{array}\right)=M\sqrt{|P_{1}|}\left(\begin{array}{c}
u^{+}(a)\\
u^{-}(b)
\end{array}\right)=M(Q_{+}+Q_{-})\left(\begin{array}{c}
u^{+}(a)\\
u^{-}(b)
\end{array}\right),
\]
for some matrix $M$. Then (standard) integration by parts reveals
that $\tilde{\mathcal{A}}$ is dissipative if and only if $\|M\|\leq1$
(for the only if part use \prettyref{lem:trace_onto}). 
\end{rem}

\section{Characterisation of Exponential Stability\label{sec:Characterisation-of-Exponential} }

In this section we prove the main theorem of the present manuscript.
We start off with an elementary observation, see also \cite{PTWW_22}.
\begin{lem}
\label{lem:basictransform}Let $a,b\in\R$ with $a<b$ and $d\in\N$.
Moreover, let $\mathcal{H}\in L_{\infty}(a,b;\R^{d\times d})$ with
$0<m\leq\mathcal{H}(x)=\mathcal{H}(x)^{\top}$ for almost every $x\in(a,b)$
and let $H$ be the Hilbert space $L_{2}(a,b;\R^{d})$ endowed with
the inner product $\langle u,v\rangle_{H}\coloneqq\langle\mathcal{H}u,v\rangle_{L_{2}}.$ 

Define 
\[
S:L_{2}(a,b)^{d}\to H,\quad u\mapsto\mathcal{H}^{-1}u.
\]
Then $S$ is a Banach space isomorphism with $S^{\ast}v=v$ for each
$v\in L_{2}(a,b)^{d}.$
\end{lem}

\begin{proof}
$S$ clearly is bounded, one-to-one and onto; i.e., a Banach space
isomorphism. Moreover, for $u\in L_{2}(a,b)^{d},v\in H$ we compute
\[
\langle Su,v\rangle_{H}=\langle\mathcal{H}\mathcal{H}^{-1}u,v\rangle_{L_{2}(a,b)^{d}}=\langle u,v\rangle_{L_{2}(a,b)^{d}},
\]
which shows $S^{\ast}v=v.$
\end{proof}
For $t\in\R$ we consider the following ordinary differential equation
\[
v'(x)=P_{1}^{-1}(\i t\mathcal{H}(x)^{-1}-P_{0})v(x)
\]
and denote by $\Phi_{t}\colon[a,b]\to\R^{d\times d}$ the fundamental
matrix satisfying $\Phi_{t}(a)=1_{d}.$ 
\begin{lem}
\label{lem:inverse_fundamental_matrix}For $t\in\R$ and $x\in[a,b]$
we have $\Phi_{t}(x)^{-1}=P_{1}^{-1}\Phi_{t}(x)^{\ast}P_{1}.$ In
particular $\|\Phi_{t}(x)^{-1}\|\leq\|P_{1}^{-1}\|\|P_{1}\|\|\Phi_{t}(x)\|.$ 
\end{lem}

\begin{proof}
We compute the derivative of $\Psi\colon x\mapsto\Phi_{t}(x)^{-1}$.
We have 
\begin{align*}
\Psi'(x) & =-\Psi(x)\Phi_{t}'(x)\Psi(x)\\
 & =-\Psi(x)P_{1}^{-1}(\i t\mathcal{H}(x)^{-1}-P_{0})\Phi_{t}(x)\Psi(x)\\
 & =-\Psi(x)P_{1}^{-1}(\i t\mathcal{H}(x)^{-1}-P_{0}).
\end{align*}
Hence, 
\begin{align*}
\left(\Psi^{\ast}\right)'(x)=\Psi'(x)^{\ast} & =(\i t\mathcal{H}(x)^{-1}-P_{0})P_{1}^{-1}\Psi(x)^{\ast}
\end{align*}
and 
\[
\left(P_{1}^{-1}\Psi^{\ast}\right)'(x)=P_{1}^{-1}(\i t\mathcal{H}(x)^{-1}-P_{0})P_{1}^{-1}\Psi(x)^{\ast}.
\]
Thus, $P_{1}^{-1}\Psi^{*}$ solves the same ODE as $\Phi_{t}$ does;
taking into account the initial values it follows that 
\[
P_{1}^{-1}\Psi^{\ast}(x)=\Phi_{t}(x)P_{1}^{-1}
\]
and hence, 
\[
\Phi_{t}(x)^{-1}=P_{1}^{-1}\Phi_{t}(x)^{\ast}P_{1}.\tag*{\qedhere}
\]
 
\end{proof}
The theorem underlying the proof of our characterisation of exponential
stability is the celebrated result by Gearhart--Prüß.
\begin{thm}[Gearhart--Prüß, see \cite{Pruess1984}]
\label{thm:GPT} $\mathcal{A}$ generates an exponentially stable
$C_{0}$-semi-group on a Hilbert space $H$, if, and only if, 
\begin{equation}
\sup_{z\in\mathbb{C}_{\Re>0}}\|(z-\mathcal{A})^{-1}\|_{L(H)}<\infty.\label{eq:exp_stable1}
\end{equation}
\end{thm}

The decisive step to reach our goal now becomes the following result.
\begin{thm}
\label{thm:main-1}Let $\mathcal{A}$ be as in \prettyref{thm:m-accretive-pH}
(iii) and assume that $\sup_{t\in\R}\|\Phi_{t}\|_{\infty}<\infty.$
Then the following statements are equivalent:

\begin{enumerate}[(i)]

\item $\i\R\subseteq\rho(\mathcal{A})$ and $\sup_{t\in\R}\|(\i t-\mathcal{A})^{-1}\|<\infty$,

\item for all $t\in\R$ the matrix 
\[
T_{t}\coloneqq W\left(\begin{array}{c}
\Phi_{t}(b)\\
1
\end{array}\right)\in\R^{d\times d}
\]
is invertible and $\sup_{t\in\R}\|T_{t}^{-1}\|<\infty.$

\end{enumerate}
\end{thm}

\begin{proof}
We use the operator $S$ as given in \prettyref{lem:basictransform}.
First note that for $f\in H$ we find $u\in\dom(\mathcal{A})$ with
$(\i t-\mathcal{A})u=f$ if and only if 
\[
S^{\ast}(\i t-\mathcal{A})SS^{-1}u=S^{\ast}f
\]
and 
\[
S^{\ast}(\i t-\mathcal{A})S=(\i tS^{\ast}S-S^{\ast}\mathcal{A}S)=(\i t\mathcal{H}^{-1}-A),
\]
where 
\[
A\colon\dom(A)\subseteq L_{2}([a,b];\R^{d})\to L_{2}([a,b];\R^{d}),\quad v\mapsto P_{1}v'+P_{0}v
\]
and 
\[
\dom(A)=\left\{ v\in H^{1}([a,b];\R^{d})\,;\:W\left(\begin{array}{c}
v(b)\\
v(a)
\end{array}\right)=0\right\} .
\]
Now $(\i t-\mathcal{A})u=f$ if and only if $v\coloneqq S^{-1}u\in\dom(A)$
satisfies 
\[
\i t\mathcal{H}^{-1}v-P_{1}v'-P_{0}v=S^{\ast}f=f,
\]
which in turn is equivalent to 
\[
v'=P_{1}^{-1}(\i t\mathcal{H}^{-1}-P_{0})v-P_{1}^{-1}f.
\]
Note that then 
\[
v(x)=\Phi_{t}(x)v_{0}-\Phi_{t}(x)\int_{a}^{x}\Phi_{t}(s)^{-1}P_{1}^{-1}f(s)\d s
\]
for some $v_{0}=v(a)\in\R^{d}.$ Then $v\in\dom(A)$ is equivalent
to 
\begin{align*}
0 & =W\left(\begin{array}{c}
v(b)\\
v(a)
\end{array}\right)\\
 & =W\left(\begin{array}{c}
\Phi_{t}(b)\\
1
\end{array}\right)v_{0}-W\left(\begin{array}{c}
\Phi_{t}(b)\int_{a}^{b}\Phi_{t}(s)^{-1}P_{1}^{-1}f(s)\d s\\
0
\end{array}\right)\\
 & =T_{t}v_{0}-W\left(\begin{array}{c}
\Phi_{t}(b)\int_{a}^{b}\Phi_{t}(s)^{-1}P_{1}^{-1}f(s)\d s\\
0
\end{array}\right).
\end{align*}

(i) $\Rightarrow$ (ii): We first show that $T_{t}$ is invertible.
For this, let $v_{0}\in\R^{d}$ with $T_{t}v_{0}=0.$ Set $v(x)\coloneqq\Phi(x)v_{0}.$
Then by what we have shown above $u\coloneqq Sv\in\dom(\mathcal{A})$
and satisfies $(\i t-\mathcal{A})u=0$ and thus, $u=0.$ The latter
gives $v=0$ and thus, $v(a)=v_{0}=0.$ So $T_{t}$ is invertible.
Assume now that 
\[
\sup_{t\in\R}\|T_{t}^{-1}\|=\infty.
\]
Since $t\mapsto T_{t}^{-1}$ is bounded on compact sets, we find a
sequence $(t_{n})_{n}$ with $t_{n}\to\infty$ such that $\|T_{t_{n}}^{-1}\|\to\infty.$
By the uniform boundedness principle we find $z\in\R^{d}$ such that
\[
\|T_{t_{n}}^{-1}z\|\to\infty\quad(n\to\infty).
\]
Since $W$ has full rank, we find $y,\tilde{y}\in\R^{d}$ such that
$z=W\left(\begin{array}{c}
y\\
\tilde{y}
\end{array}\right).$ Set now $y_{n}\coloneqq-y+\Phi_{t_{n}}(b)\tilde{y}$ and note that
$(y_{n})_{n}$ is bounded, since $(\Phi_{t_{n}}(b))_{n}$ is bounded.
Further, set 
\[
f_{n}(x)\coloneqq-\frac{1}{b-a}P_{1}\Phi_{t_{n}}(x)\Phi_{t_{n}}(b)^{-1}y_{n}\quad(x\in[a,b]).
\]
Then $f_{n}\in L_{\infty}([a,b];\R^{d})\subseteq H$ and $(f_{n})_{n}$
is uniformly bounded in $L_{\infty}([a,b];\R^{d})$, where we use
\prettyref{lem:inverse_fundamental_matrix}. Set now $u_{n}\coloneqq(\i t_{n}-\mathcal{A})^{-1}f_{n}$
and note that $(u_{n})_{n}$ is uniformly bounded in $H$ by assumption.
Hence, so is $v_{n}\coloneqq S^{-1}u_{n}$ and moreover, 
\[
v_{n}(x)=\Phi_{t_{n}}(x)v_{0,n}-\Phi_{t_{n}}(x)\int_{a}^{x}\Phi_{t_{n}}(s)^{-1}P_{1}^{-1}f_{n}(s)\d s
\]
 with 
\begin{align*}
T_{t_{n}}v_{0,n} & =W\left(\begin{array}{c}
\Phi_{t}(b)\int_{a}^{b}\Phi_{t}(s)^{-1}P_{1}^{-1}f_{n}(s)\d s\\
0
\end{array}\right)=-W\left(\begin{array}{c}
y_{n}\\
0
\end{array}\right)\\
 & =W\left(\begin{array}{c}
y-\Phi_{t_{n}}(b)\tilde{y}\\
0
\end{array}\right)=W\left(\begin{array}{c}
y\\
\tilde{y}
\end{array}\right)-T_{t_{n}}\tilde{y}=z-T_{t_{n}}\tilde{y}.
\end{align*}
The latter gives 
\[
T_{t_{n}}^{-1}z=v_{0,n}+\tilde{y}.
\]
Hence $\|v_{0,n}\|\to\infty$, but on the other hand 
\[
\|v_{0,n}\|=\frac{1}{\sqrt{b-a}}\|v_{0,n}\|_{L_{2}([a,b];\R^{d})}\leq C\left(\|v_{n}\|_{L_{2}([a,b];\R^{d})}+\|f_{n}\|_{L_{2}([a,b];\R^{d})}\right),
\]
for some constant $C>0$, where we again invoke \prettyref{lem:inverse_fundamental_matrix}.
This yields a contradiction.\\
(ii) $\Rightarrow$(i): By our considerations at the beginning of
the proof, we obtain 
\[
(\i t-\mathcal{A})u=f
\]
if and only if $u=Sv$ for 
\[
v(x)=\Phi_{t}(x)v_{0}-\Phi_{t}(x)\int_{a}^{x}\Phi_{t}(s)^{-1}P_{1}^{-1}f(s)\d s
\]
with 
\[
T_{t}v_{0}=W\left(\begin{array}{c}
\Phi_{t}(b)\int_{a}^{b}\Phi_{t}(s)^{-1}P_{1}^{-1}f(s)\d s\\
0
\end{array}\right).
\]
Since the last equality can be solved uniquely, since $T_{t}$ is
invertible, we infer that $\i t\in\rho(\mathcal{A})$ and by \prettyref{lem:inverse_fundamental_matrix}
we have 
\[
\|(\i t-\mathcal{A})^{-1}f\|_{H}\leq\|S\|\|v\|_{L_{2}}\leq C\left(\|v_{0}\|+\|f\|_{L_{2}}\right)\leq\tilde{C}\|f\|_{H},
\]
where we have used that $\|\cdot\|_{H}$ and $\|\cdot\|_{L_{2}}$
are equivalent and $T_{t}^{-1}$ is uniformly bounded. 
\end{proof}
Our main result may now be stated as follows.
\begin{thm}
\label{thm:mrcompressed}Let $P_{1},P_{0}\in\R^{d\times d}$ such
that $P_{1}=P_{1}^{\ast}$ is invertible, $P_{0}^{*}=-P_{0}$, and
$\mathcal{H}:[a,b]\to\R^{d\times d}$ be a measurable function such
that
\[
\exists m,M>0\:\forall x\in[a,b]:\:m\leq\mathcal{H}(x)=\mathcal{H}(x)^{\ast}\le M.
\]
We consider $\mathcal{A}\subseteq P_{1}\partial_{x}\mathcal{H}+P_{0}\mathcal{H}$
on the Hilbert space \textup{$H\coloneqq(L_{2}(a,b)^{d},\langle\mathcal{H}\cdot,\cdot\rangle_{L_{2}})$
}with domain 
\[
\dom(\mathcal{A})=\left\{ u\in H\,;\,\mathcal{H}u\in H^{1}([a,b])^{d},\,W\left(\begin{array}{c}
\left(\mathcal{H}u\right)(b)\\
\left(\mathcal{H}u\right)(a)
\end{array}\right)=0\right\} 
\]
for a matrix $W\in\R^{d\times2d}$ satisfying $\rk W=d$ and 
\[
W\left(\begin{array}{cc}
P_{1} & -P_{1}\\
1_{d} & 1_{d}
\end{array}\right)^{-1}\left(\begin{array}{cc}
0 & 1_{d}\\
1_{d} & 0
\end{array}\right)\left(W\left(\begin{array}{cc}
P_{1} & -P_{1}\\
1_{d} & 1_{d}
\end{array}\right)^{-1}\right)^{\ast}\geq0.
\]
Moreover, assume that the fundamental matrix $\Phi_{t}$ associated
to the ODE-system 
\[
u'(x)=P_{1}^{-1}(\i t\mathcal{H}(x)^{-1}-P_{0})u(x)\quad(x\in[a,b])
\]
with $\Phi_{t}(a)=1_{d}$ for each $t\in\R$ satisfies $\sup_{t\in\R}\|\Phi_{t}\|<\infty.$
Then the following statements are equivalent:

\begin{enumerate}[(i)]

\item the (contraction-)semi-group $\left(\e^{t\mathcal{A}}\right)_{t\geq0}$
is exponentially stable,

\item for each $t\in\R$ the matrix 
\[
T_{t}\coloneqq W\left(\begin{array}{c}
\Phi_{t}(b)\\
I_{d}
\end{array}\right)
\]
is invertible with $\sup_{t\in\R}\|T_{t}^{-1}\|<\infty.$

\end{enumerate}
\end{thm}

\begin{proof}
The assumptions guarantee that $\mathcal{A}$ is $m$-dissipative
by \prettyref{thm:m-accretive-pH}. Thus, using \prettyref{thm:GPT},
(ii) holds, if and only if, condition (i) in \prettyref{thm:main-1}
is satisfied. Thus, the claim follows from \prettyref{thm:main-1}.
\end{proof}

\section{A First Application\label{sec:FirstApplications}}

In this section, we specialise to the case $P_{0}=0$. Thus, throughout
this section, let
\begin{align*}
\mathcal{A}\colon\dom(\mathcal{A})\subseteq H & \to H\\
u & \mapsto P_{1}(\mathcal{H}u)',
\end{align*}
be a port-Hamiltonian operator as in \prettyref{eq:pHo} with $P_{0}=0$.
Furthermore, we assume $\mathcal{A}$ to generate a $C_{0}$-semi-group
of contractions. Hence, by \prettyref{thm:GenContr} we find a full
rank matrix $W\in\mathbb{R}^{d\times2d}$ satisfying \prettyref{eq:posdefWB}
such that
\[
\dom(\mathcal{A})=\left\{ u\in H\,;\,\mathcal{H}u\in H^{1}(a,b)^{d},\,W\left(\begin{array}{c}
\left(\mathcal{H}u\right)(b)\\
\left(\mathcal{H}u\right)(a)
\end{array}\right)=0\right\} .
\]
We recall $\Phi_{t}$, the fundamental solution associated with 
\[
u'(x)=\i tP_{1}^{-1}\mathcal{H}(x)^{-1}u(x)\quad(x\in(a,b))
\]
such that $\Phi_{t}(a)=1_{d}$ and condition \ref{eq:(B)} stating
$\sup_{t\in\mathbb{R}}\|\Phi_{t}\|_{\infty}<\infty$.

We specialise this result to the case when the Hamiltonian density
$\mathcal{H}$ is constant. For this we state a special case of \prettyref{thm:HBV},
which we prove in \prettyref{subsec:A-regularity-condition}.
\begin{prop}
\label{prop:constantH_0}Let $\mathcal{H}_{0}=\mathcal{H}_{0}^{*}\in\mathbb{R}^{d\times d}$
be non-negative and invertible. Then for all $Q_{1}=Q_{1}^{*}\in\mathbb{R}^{d\times d}$
invertible
\[
\sup_{t\in\mathbb{R}}\|\e^{\i tQ_{1}\mathcal{H}_{0}}\|<\infty.
\]
\end{prop}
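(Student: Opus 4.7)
The proposition asks for a uniform-in-$t$ bound on $\|\e^{\i t Q_1 \mathcal{H}_0}\|$. The subtlety is that, although both $Q_1$ and $\mathcal{H}_0$ are self-adjoint, their product $Q_1\mathcal{H}_0$ typically is not, so $\e^{\i t Q_1\mathcal{H}_0}$ need not be unitary. My plan is therefore to exhibit a similarity transformation that turns $Q_1\mathcal{H}_0$ into a self-adjoint matrix; the estimate then follows because the matrix exponential of $\i t$ times a self-adjoint matrix is unitary.

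Concretely, since $\mathcal{H}_0$ is symmetric and strictly positive, it possesses a symmetric, positive square root $\mathcal{H}_0^{1/2}$ which is invertible with inverse $\mathcal{H}_0^{-1/2}$. I would check the algebraic identity
\[
Q_1\mathcal{H}_0 = \mathcal{H}_0^{-1/2}\bigl(\mathcal{H}_0^{1/2} Q_1 \mathcal{H}_0^{1/2}\bigr)\mathcal{H}_0^{1/2},
\]
and observe that $S \coloneqq \mathcal{H}_0^{1/2} Q_1 \mathcal{H}_0^{1/2}$ is again real symmetric, hence self-adjoint. Since conjugation commutes with the power series defining the matrix exponential, this gives
\[
\e^{\i t Q_1 \mathcal{H}_0} = \mathcal{H}_0^{-1/2}\e^{\i t S}\mathcal{H}_0^{1/2} \qquad (t\in\R).
\]

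Self-adjointness of $S$ implies that $\e^{\i t S}$ is unitary for every $t\in\R$, so $\|\e^{\i t S}\|=1$. Submultiplicativity of the operator norm then yields
\[
\|\e^{\i t Q_1 \mathcal{H}_0}\| \le \|\mathcal{H}_0^{-1/2}\|\,\|\mathcal{H}_0^{1/2}\|,
\]
which is independent of $t$ and finite by invertibility and boundedness of $\mathcal{H}_0$. No step here is particularly delicate; the only thing to be careful about is justifying the conjugation identity for the exponential, which I would do by noting that conjugation by $\mathcal{H}_0^{1/2}$ is an algebra automorphism on $\mathbb{C}^{d\times d}$ and hence commutes with taking the matrix exponential (equivalently, verifying it termwise on the power series).
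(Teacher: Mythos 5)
Your argument is correct. The key algebraic identity $Q_1\mathcal{H}_0=\mathcal{H}_0^{-1/2}\bigl(\mathcal{H}_0^{1/2}Q_1\mathcal{H}_0^{1/2}\bigr)\mathcal{H}_0^{1/2}$ holds, $S\coloneqq\mathcal{H}_0^{1/2}Q_1\mathcal{H}_0^{1/2}$ is real symmetric, conjugation does pass through the exponential series, and $\e^{\i tS}$ is unitary for all $t\in\R$; the resulting bound $\|\e^{\i tQ_1\mathcal{H}_0}\|\le\|\mathcal{H}_0^{-1/2}\|\,\|\mathcal{H}_0^{1/2}\|$ is uniform in $t$ and finite. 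This is, however, a genuinely different route from the paper's. The paper proves \prettyref{prop:constantH_0} in one line by noting that a constant matrix-valued function is of bounded variation and invoking \prettyref{thm:HBV}; that theorem is a much heavier result whose proof passes through the change of coordinates to $\mathcal{O}$ and $\Gamma_t$, the differentiation of $\langle v,\mathcal{O}v\rangle$ as a $BV$-$H^1$ product, and Gronwall's inequality for measures. Your proof is entirely self-contained linear algebra and produces an explicit bound equal to the condition number of $\mathcal{H}_0^{1/2}$, which the paper's route does not give directly. The trade-off is that your argument only covers the constant case, whereas the paper's machinery is reused for the full $BV$ class; but for the statement as posed, your elementary similarity argument is cleaner and arguably preferable to deducing it from \prettyref{thm:HBV}.
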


\begin{cor}
\label{cor:constantH}Assume, additionally to the assumptions in this
section, $\mathcal{H}(x)=\mathcal{H}_{0}$ for some $\mathcal{H}_{0}\in\mathbb{R}^{d\times d}$
and all $x\in(a,b)$. Then the following conditions are equivalent:

\begin{enumerate}[(i)]

\item $\mathcal{A}$ generates an exponentially stable $C_{0}$-semi-group.

\item For all $t\in\R$ 
\[
T_{t}\coloneqq W\left(\begin{array}{c}
\e^{\i tP_{1}^{-1}\mathcal{H}_{0}^{-1}}\\
1_{d}
\end{array}\right)
\]
is invertible with $\sup_{t\in\R}\|T_{t}^{-1}\|<\infty.$

\end{enumerate}
\end{cor}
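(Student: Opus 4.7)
The plan is to reduce the corollary to Theorem~\ref{thm:mainresult} by verifying assumption~\ref{eq:(B)} in the constant-$\mathcal{H}$ case, which is immediate from Proposition~\ref{prop:constantH_0}, and then to rewrite the resulting condition on $T_{t}$ in the form asserted.

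First, since $\mathcal{H}(x)=\mathcal{H}_{0}$ is independent of $x$, the linear ODE
\[
u'(x)=\i t P_{1}^{-1}\mathcal{H}_{0}\,u(x),\qquad u(a)=1_{d},
\]
has constant coefficients, so its fundamental solution is explicitly
\[
\Phi_{t}(x)=\e^{\i t(x-a)P_{1}^{-1}\mathcal{H}_{0}},\qquad x\in(a,b),\ t\in\R.
\]

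Second, I verify \ref{eq:(B)} via Proposition~\ref{prop:constantH_0}. Since $P_{1}=P_{1}^{*}$ is invertible, $Q_{1}\coloneqq P_{1}^{-1}$ is self-adjoint and invertible; together with the assumption that $\mathcal{H}_{0}=\mathcal{H}_{0}^{*}\geq 0$ is invertible, Proposition~\ref{prop:constantH_0} yields
\[
C\coloneqq \sup_{s\in\R}\bigl\|\e^{\i s P_{1}^{-1}\mathcal{H}_{0}}\bigr\|<\infty.
\]
Specialising to $s=t(x-a)$ and taking the supremum over $t\in\R$ and $x\in(a,b)$ gives $\sup_{t\in\R}\|\Phi_{t}\|_{\infty}\leq C<\infty$, which is exactly \ref{eq:(B)}.

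Third, Theorem~\ref{thm:mainresult} now applies and shows that $-\mathcal{A}$ generates an exponentially stable $C_{0}$-semi-group if and only if the matrices
\[
W_{B}\begin{pmatrix}\Phi_{t}(b)\\ 1_{d}\end{pmatrix}=W_{B}\begin{pmatrix}\e^{\i t(b-a)P_{1}^{-1}\mathcal{H}_{0}}\\ 1_{d}\end{pmatrix}
\]
are invertible for every $t\in\R$ with uniformly bounded inverses. Since $t\mapsto t/(b-a)$ is a bijection of $\R$, this is equivalent to the same statement for
\[
T_{t}=W_{B}\begin{pmatrix}\e^{\i t P_{1}^{-1}\mathcal{H}_{0}}\\ 1_{d}\end{pmatrix},
\]
which is condition (ii). No substantial obstacle arises: the entire content is the observation that a constant Hamiltonian produces an explicit matrix-exponential fundamental solution whose uniform boundedness in $t$ is the content of Proposition~\ref{prop:constantH_0}, while the reparametrisation $t\mapsto t/(b-a)$ is purely cosmetic.
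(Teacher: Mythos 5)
Your proposal is correct and follows the same route as the paper: compute $\Phi_t(x)=\e^{\i t(x-a)P_1^{-1}\mathcal{H}_0}$ explicitly, invoke Proposition~\ref{prop:constantH_0} (with $Q_1=P_1^{-1}$) to verify~\ref{eq:(B)}, apply Theorem~\ref{thm:mainresult}, and absorb the factor $(b-a)$ by reparametrising $t$. The only difference is that you spell out the intermediate bound $\sup_{t,x}\|\Phi_t(x)\|\le\sup_{s}\|\e^{\i sP_1^{-1}\mathcal{H}_0}\|$, which the paper leaves implicit.
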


\begin{proof}
Since $\mathcal{H}$ is constant equal $\mathcal{H}_{0}$, $\Phi_{t}(x)=\e^{\i t(x-a)P_{1}^{-1}\mathcal{H}_{0}^{-1}}$.
Thus, by \prettyref{prop:constantH_0}, \ref{eq:(B)} holds, which
in turn leads to the applicability of \prettyref{thm:mrcompressed}.
To obtain the claim it thus suffices to note that $t$ runs through
all reals, making the prefactor $(b-a)$ superfluous, and to read
off the equivalence stated in \prettyref{thm:mrcompressed}.
\end{proof}
Before we put this characterisation result into perspective of the
results available in the literature, we provide an example that confirms
that invertibility for $T_{t}$ alone is not sufficient to deduce
the uniform bound of the inverses.
\begin{example}
\label{exa:unifiormsmatter}Consider the matrices $M\coloneqq-\frac{1}{2}\left(\begin{array}{cc}
1 & 1\\
1 & 1
\end{array}\right)$ (note that $\|M\|=1$) and $\mathcal{H}(x)\coloneqq\mathcal{H}_{0}\coloneqq\left(\begin{array}{cc}
1 & 0\\
0 & \sqrt{2}
\end{array}\right)^{-1}$ for all $x\in(a,b)\coloneqq(0,1)$ as well as $P_{1}=-1_{2}=\left(\begin{array}{cc}
-1 & 0\\
0 & -1
\end{array}\right)$. By \prettyref{thm:m-accretive-pH} (iv)
\[
W=\left(\begin{array}{cc}
-M & 1_{2}\end{array}\right)
\]
leads to $\mathcal{A}$ generate a contraction semi-group. We consider
\begin{align*}
T_{t} & =W\left(\begin{array}{c}
\e^{\i tP_{1}^{-1}\mathcal{H}_{0}^{-1}}\\
1_{2}
\end{array}\right)=\left(\begin{array}{cc}
-M & 1_{2}\end{array}\right)\left(\begin{array}{c}
\e^{-\i t\mathcal{H}_{0}^{-1}}\\
1_{2}
\end{array}\right)=-M\e^{-\i t\mathcal{H}_{0}^{-1}}+1_{2}
\end{align*}
and analyse the respective inverses for all $t\in\R$. We first show
that $T_{t}$ is invertible. For this, we compute
\begin{align*}
\det(T_{t}) & =\det\left(\left(\begin{array}{cc}
1 & 0\\
0 & 1
\end{array}\right)+\frac{1}{2}\left(\begin{array}{cc}
1 & 1\\
1 & 1
\end{array}\right)\left(\begin{array}{cc}
\e^{-\i t} & 0\\
0 & \e^{-\i\sqrt{2}t}
\end{array}\right)\right)=\det\left(\begin{array}{cc}
1+\frac{1}{2}\e^{-\i t} & \frac{1}{2}\e^{-\i\sqrt{2}t}\\
\frac{1}{2}\e^{-\i t} & 1+\frac{1}{2}\e^{-\i\sqrt{2}t}
\end{array}\right)\\
 & =1+\tfrac{1}{4}\e^{-\i(1+\sqrt{2})t}+\tfrac{1}{2}\e^{-\i t}+\tfrac{1}{2}\e^{-\i\sqrt{2}t}-\tfrac{1}{4}\e^{-\i(1+\sqrt{2})t}=1+\tfrac{1}{2}\left(\e^{-\i t}+\e^{-\i\sqrt{2}t}\right).
\end{align*}
Since $|\e^{-\i t}|=|\e^{-\i\sqrt{2}t}|=1,$ the determinant vanishes
if and only if $\e^{-\i t}=\e^{-\i\sqrt{2}t}=-1.$ However, since
$\sqrt{2}$ is irrational, this cannot happen for any $t\in\R.$ Hence,
$T_{t}$ is invertible for all $t\in\R$. Moreover, choosing $t_{k}\coloneqq-3\pi k$
for $k\in\Z,$ we obtain 
\[
\e^{-\i t_{k}}=-1,\quad\e^{-\i\sqrt{2}t_{k}}=\e^{\i\sqrt{2}\pi}\e^{\i2\pi\sqrt{2}k}
\]
and since $\{\e^{\i2\pi\sqrt{2}k}\,;\,k\in\Z\}$ lies dense in the
unit sphere $S_{1}$ (see, e.g., \cite[Theorem 3.13]{Devaney1989})
the net $(\det(T_{t_{k}}))_{k\in\Z}$ accumulates at $0$. Since $T_{t}$
itself is bounded in $t$, it follows that $(T_{t}^{-1})_{t\in\R}$
is unbounded. 
\end{example}

Next, one could ask whether or not the exponential stability of the
semi-group generated by $\mathcal{A}$ depends on the Hamiltonian
density $\mathcal{H}$. \cite[Section 5]{Engel2013} provided an example
confirming dependence. We reprove this result with the characterisation
from above.
\begin{example}
Let $\theta\in\mathbb{R}$, $\theta>-1$. Consider the matrices $M\coloneqq\frac{1}{2}\left(\begin{array}{cc}
1 & 1\\
-1 & -1
\end{array}\right)$ (note that $\|M\|=1$) and $\mathcal{H}(x)\coloneqq\mathcal{H}_{\theta}\coloneqq\left(\begin{array}{cc}
1+\theta & 0\\
0 & 1
\end{array}\right)$ for all $x\in(a,b)\coloneqq(0,1)$ as well as $P_{1}=1_{2}$. By
\prettyref{thm:m-accretive-pH} (iv)
\[
W=\left(\begin{array}{cc}
1_{2} & -M\end{array}\right)
\]
leads to $\mathcal{A}_{\theta}\coloneqq P_{1}\partial_{x}\mathcal{H}_{\theta}=\partial_{x}\mathcal{H}_{\theta}$
generate a contraction semi-group. In order to assess for which $\theta$
this semi-group is exponentially stable, using \prettyref{cor:constantH},
we consider
\begin{align*}
T_{t}^{\theta} & \coloneqq W\left(\begin{array}{c}
\e^{\i t(b-a)P_{1}^{-1}\mathcal{H}_{\theta}^{-1}}\\
1_{2}
\end{array}\right)=\left(\begin{array}{cc}
1_{2} & -M\end{array}\right)\left(\begin{array}{c}
\e^{\i t\mathcal{H}_{\theta}^{-1}}\\
1_{2}
\end{array}\right)\\
 & =\e^{\i t\mathcal{H}_{\theta}^{-1}}-M=\left(\begin{array}{cc}
\e^{\i\frac{t}{1+\theta}} & 0\\
0 & \e^{\i t}
\end{array}\right)-\frac{1}{2}\left(\begin{array}{cc}
1 & 1\\
-1 & -1
\end{array}\right)
\end{align*}
and compute its determinant by
\[
\det T_{t}^{\theta}=\frac{1}{2}\e^{\i\frac{t}{1+\theta}}-\frac{1}{2}\e^{\i t}+\e^{\i t\frac{2+\theta}{1+\theta}}.
\]
Since $T_{t}^{\theta}$ is uniformly bounded in $t,$ the uniform
boundedness of $\left(T_{t}^{\theta}\right)^{-1}$ is equivalent to
the uniform boundedness of $\frac{1}{\det T_{t}^{\theta}}$. For $\theta=0$,
we obtain that $T_{t}^{0}$ is invertible with bound for the inverse
uniform in $t$, leading to $\mathcal{A}_{0}$ generating an exponentially
stable semi-group. If $t=3\pi$ and $\theta=1/2$, then $T_{t}^{\theta}$
is not invertible and, hence, $\mathcal{A}_{1/2}$ does not generate
an exponentially stable semi-group. A closer look at the proof of
\prettyref{thm:main-1} reveals that $3\pi\i\in\sigma_{p}(\mathcal{A}_{1/2})$
and hence, the generated semi-group is not even asymptotically stable. 
\end{example}

\section{A Sufficient Criterion for Exponential Stability\label{sec:A-Sufficient-Criterion}}

It is the aim of this section to put the characterisation into perspective
of the literature. For this, throughout this section, we let
\begin{align*}
\mathcal{A}\colon\dom(\mathcal{A})\subseteq H & \to H\\
u & \mapsto P_{1}(\mathcal{H}u)'+P_{0}\mathcal{H}u,
\end{align*}
be a port-Hamiltonian operator as in \prettyref{eq:pHo}. We recall
$\Phi_{t}$, the fundamental solution associated with 
\[
u'(x)=P_{1}^{-1}(\i t\mathcal{H}(x)^{-1}-P_{0})u(x)\quad(x\in(a,b))
\]
such that $\Phi_{t}(a)=1_{d}$ and condition \ref{eq:(B)} stating
$\sup_{t\in\mathbb{R}}\|\Phi_{t}\|_{\infty}<\infty$. 
\begin{thm}
\label{thm:suffcrit-1} Assume \ref{eq:(B)} and that $\mathcal{A}$
generates a contraction semi-group. If there exist $c>0$ and $\eta\in\{a,b\}$
such that for all $u\in\dom(\mathcal{A})$ 
\begin{equation}
\langle u,\mathcal{A}u\rangle_{H}\leq-c\|\mathcal{H}u(\eta)\|^{2},\label{eq:posdefAsuff}
\end{equation}
then $\mathcal{A}$ generates an exponentially stable $C_{0}$-semi-group.
\end{thm}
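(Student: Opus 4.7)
The plan is to establish the uniform resolvent estimate on the imaginary axis via Gearhart--Pr\"u{\ss} (\prettyref{thm:GPT}), transferring the problem to the reference setting of \prettyref{sec:The-Strategy-of} where integration by parts for $D\partial_{s}$ and variation of constants with $\Gamma_{t}$ are simultaneously available. By \prettyref{thm:TKT} the bounded skew-self-adjoint perturbation $P_{0}\mathcal{H}$ preserves exponential stability, so I assume $P_{0}=0$. Then \prettyref{prop:resolventestim} reduces the task to showing
\[
\sup_{t\in\R}\|(it\mathcal{O}+\mathcal{C})^{-1}\|_{L(L_{2}(-\tfrac{1}{2},\tfrac{1}{2})^{d})}<\infty.
\]

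I would first transfer the hypothesis to $\mathcal{C}$. Setting $T\coloneqq S\Psi^{\ast}UQ^{\ast}$, the definitions give $\mathcal{C}=T^{\ast}\mathcal{A}T$ (using $Q=Q^{\ast}$), so $\langle v,\mathcal{C}v\rangle_{L_{2}}=\langle Tv,\mathcal{A}Tv\rangle_{H}$ for $v\in\dom(\mathcal{C})$. Also $\mathcal{H}Tv=\Psi^{\ast}UQ^{\ast}v$, so $(\mathcal{H}Tv)(a)=(b-a)^{-1/2}UQ^{\ast}v(-\tfrac{1}{2})$ and $(\mathcal{H}Tv)(b)=(b-a)^{-1/2}UQ^{\ast}v(\tfrac{1}{2})$; invertibility of $UQ^{\ast}$ converts \prettyref{eq:posdefAsuff} to the existence of $c'>0$ and $\eta'\in\{-\tfrac{1}{2},\tfrac{1}{2}\}$ with
\[
\Re\langle v,\mathcal{C}v\rangle_{L_{2}}\geq c'\|v(\eta')\|^{2}\quad(v\in\dom(\mathcal{C})).
\]
Secondly, the direct computation $\Phi_{t}(x(s))=UQ\,\Gamma_{t}(s)Q^{-1}U^{\ast}$ (where $x(s)=(b-a)s+\tfrac{a+b}{2}$) turns \ref{eq:(B)} into $\sup_{t}\|\Gamma_{t}\|_{\infty}<\infty$. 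Crucially, the Liouville formula gives
\[
\det\Gamma_{t}(s)=\exp\!\left(it\int_{-1/2}^{s}\operatorname{tr}(D\mathcal{O}(y))\,dy\right),
\]
whose modulus equals one because $D\mathcal{O}(y)$ is a real matrix ($D$ is real, $\mathcal{O}(y)$ is real symmetric). Consequently Cramer's rule yields $\sup_{t}\|\Gamma_{t}^{-1}\|_{\infty}<\infty$ as well.

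For $u\in\dom(\mathcal{C})$ and $f\in L_{2}$ with $(it\mathcal{O}+\mathcal{C})u=f$, testing with $u$ annihilates the $it\mathcal{O}$-term (as $\langle u,\mathcal{O}u\rangle\in\R$), so
\[
c'\|u(\eta')\|^{2}\leq\Re\langle u,f\rangle_{L_{2}}\leq\|u\|_{L_{2}}\|f\|_{L_{2}}.
\]
The ODE form $u'=-itD\mathcal{O}u+Df$ and variation of constants started at $\eta'$ give
\[
u(s)=\Gamma_{-t}(s)\Gamma_{-t}(\eta')^{-1}u(\eta')+\int_{\eta'}^{s}\Gamma_{-t}(s)\Gamma_{-t}(y)^{-1}Df(y)\,dy,
\]
so the uniform $L^{\infty}$-bounds on $\Gamma_{-t}^{\pm 1}$ (combined with Cauchy--Schwarz and the interval having length one) yield $\|u\|_{L_{2}}\leq K(\|u(\eta')\|+\|f\|_{L_{2}})$. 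Substituting $\|u(\eta')\|\leq(c')^{-1/2}\|u\|_{L_{2}}^{1/2}\|f\|_{L_{2}}^{1/2}$ and absorbing via Young's inequality delivers the uniform bound $\|u\|_{L_{2}}\leq C\|f\|_{L_{2}}$; applied with $f=0$ the same estimate forces injectivity of $it\mathcal{O}+\mathcal{C}$, and bijectivity follows from a standard Fredholm argument using compactness of $H^{1}\hookrightarrow L_{2}$. The chief obstacle is producing a \emph{two-sided} uniform bound on $\Gamma_{t}$ from the one-sided hypothesis \ref{eq:(B)}; the Liouville identity -- via $\operatorname{tr}(D\mathcal{O})\in\R$, which hinges on $P_{1}=P_{1}^{\ast}$ and $\mathcal{H}=\mathcal{H}^{\ast}$ -- is what makes this work and is arguably the only non-routine ingredient beyond the translation of the hypothesis.
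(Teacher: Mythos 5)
Your proof is correct, but it takes a genuinely different route from the paper's. The paper proves \prettyref{thm:suffcrit-1} by running the characterisation machinery: it translates the hypothesis \prettyref{eq:posdefAsuff} into norm inequalities for the matrix $M$ via \prettyref{prop:posdefAposdefC} and \prettyref{prop:ReCIneqM}, then uses \prettyref{prop:GtQ} to factor $S_{t}=(MU_{t}-1_{d})V_{t}$ with $U_{t}$ unitary and $V_{t}^{-1}$ uniformly bounded, and finally establishes $\sup_{t}\|S_{t}^{-1}\|<\infty$ by a case distinction and a Young-inequality absorption, concluding via \prettyref{cor:1stcons}. You instead bypass \prettyref{cor:1stcons}, \prettyref{prop:ReCIneqM} and most of \prettyref{prop:GtQ} altogether: after \prettyref{prop:resolventestim} you run a direct energy estimate on $(it\mathcal{O}+\mathcal{C})u=f$, using the boundary trace control $c'\|u(\eta')\|^{2}\le\|u\|_{L_{2}}\|f\|_{L_{2}}$ and variation of constants anchored at $\eta'$ to get $\|u\|_{L_{2}}\lesssim\|u(\eta')\|+\|f\|_{L_{2}}$, then absorb. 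This is shorter and more self-contained, but of course it delivers only the sufficient criterion and not the full characterisation \prettyref{thm:main}, which is the reason the paper sets up $S_{t}$ in the first place.

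Two remarks on the ingredients. First, the ``chief obstacle'' you highlight -- passing from the one-sided bound $\sup_{t}\|\Gamma_{t}\|_{\infty}<\infty$ to a two-sided bound -- is handled in the paper by the identity $\Gamma_{t}(s)^{-1}=D\Gamma_{t}(s)^{\ast}D$ (\prettyref{prop:GtQ}(a), reused in \prettyref{lem:S_t inverse}(a)), which gives the sharper statement $\|\Gamma_{t}(s)^{-1}\|=\|\Gamma_{t}(s)\|$; your Liouville-plus-Cramer argument is valid (indeed $\operatorname{tr}(D\mathcal{O}(s))$ is real so $|\det\Gamma_{t}(s)|=1$, and the adjugate is controlled by $\|\Gamma_{t}\|_{\infty}^{d-1}$) but gives a weaker, dimension-dependent constant; either suffices here, and the adjoint identity is arguably the cleaner observation. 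Second, the closing ``standard Fredholm argument'' is stated quite tersely. It does work -- $\dom(\mathcal{C})$ with the graph norm embeds compactly into $L_{2}(-\tfrac{1}{2},\tfrac{1}{2})^{d}$, $1+\mathcal{C}$ is bijective by $m$-accretivity, so $\mathcal{C}$ and hence $it\mathcal{O}+\mathcal{C}$ are Fredholm of index zero, and injectivity (from your a priori bound with $f=0$) then gives bijectivity -- but in a written-out proof this should be spelled out, especially since the paper avoids it by directly constructing the solution via variation of constants once $S_{t}$ is known to be invertible.
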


\begin{rem}
In \prettyref{subsec:A-regularity-condition}, we confirm that \prettyref{thm:suffcrit-1}
implies \prettyref{thm:suffcrit}. In fact, we shall provide criteria
warranting \ref{eq:(B)} to be satisfied. One of them being $\mathcal{H}$
to be of bounded variation, see also \prettyref{sec:CondB}. Another
criterion requires a structural hypothesis on the interplay of $\mathcal{H}$
and $P_{1}$, which is independent of regularity, thus, showing that
the statements in \prettyref{thm:suffcrit-1} and \prettyref{thm:suffcrit}
are not equivalent, eventually proving that \prettyref{thm:suffcrit-1}
is a proper generalisation of \prettyref{thm:suffcrit}.
\end{rem}

\begin{lem}
\label{lem:trace_onto}Let $\mathcal{A}$ be as in \prettyref{thm:m-accretive-pH}.
Then the mapping 
\[
\tr\colon\dom(\mathcal{A})\to\R^{d}\quad u\mapsto Q_{-}(\mathcal{H}u)(b)+Q_{+}(\mathcal{H}u)(a)
\]
is onto.
\end{lem}

\begin{proof}
Let $y\in\R^{d}$ and $M$ as in \prettyref{thm:m-accretive-pH} (iv).
We define a function $v_{+}\colon[a,b]\to E_{+}$ by 
\[
v_{+}(t)\coloneqq\frac{t-a}{b-a}\left(P_{1}^{+}\right)^{-\frac{1}{2}}\iota_{+}^{\ast}My+\frac{t-b}{a-b}\left(P_{1}^{+}\right)^{-\frac{1}{2}}\iota_{+}^{\ast}y
\]
and similarly $v_{-}\colon[a,b]\to E_{-}$ by 
\[
v_{-}(t)\coloneqq\frac{t-a}{b-a}\left(P_{1}^{-}\right)^{-\frac{1}{2}}\iota_{-}^{\ast}y+\frac{t-b}{a-b}\left(P_{1}^{-}\right)^{-\frac{1}{2}}\iota_{-}^{\ast}My.
\]
Then clearly, $v\coloneqq\iota_{+}v_{+}+\iota_{-}v_{-}\in H^{1}([a,b];\R^{d})$
and it satisfies 
\begin{align*}
Q_{-}v(b)+Q_{+}v(a) & =\iota_{-}\iota_{-}^{\ast}y+\iota_{+}\iota_{+}^{\ast}y=y,\\
Q_{-}v(a)+Q_{+}v(b) & =\iota_{-}\iota_{-}^{\ast}My+\iota_{+}\iota_{+}^{\ast}My=My,
\end{align*}
which shows on the one hand that $u\coloneqq\mathcal{H}^{-1}v\in\dom(\mathcal{A})$
and on the other hand that $\tr u=y$.
\end{proof}
It is possible to recast the condition in \prettyref{thm:suffcrit-1}
as an inequality merely containing finite-dimensional spaces. This
is a simple albeit decisive observation for the proof of \prettyref{thm:suffcrit-1}.
\begin{lem}
\label{lem:equiv_sufficient_cond}Let $\mathcal{A}$ be as in \prettyref{thm:m-accretive-pH}
with $M$ as in \prettyref{thm:m-accretive-pH} (iv). Moreover let
$c>0$ and $\eta\in\{a,b\}.$ Then the following statements are equivalent:

\begin{enumerate}[(a)] 

\item For all $u\in\dom(\mathcal{A})$ 
\[
\langle\mathcal{A}u,u\rangle_{H}\leq-c\|\left(\mathcal{H}u\right)(\eta)\|^{2}.
\]

\item For all $y\in\R^{d}$ either 
\[
\|y\|^{2}-\|My\|^{2}\geq2c\left(\|\iota_{-}^{\ast}My\|_{E_{-}}^{2}+\|\iota_{+}^{\ast}y\|_{E_{+}}^{2}\right)
\]
if $\eta=a$ or 
\[
\|y\|^{2}-\|My\|^{2}\geq2c\left(\|\iota_{-}^{\ast}y\|_{E_{-}}^{2}+\|\iota_{+}^{\ast}My\|_{E_{+}}^{2}\right)
\]
if $\eta=b.$

\end{enumerate}
\[
\]
\end{lem}

\begin{proof}
By \prettyref{thm:m-accretive-pH} we have 
\begin{align*}
\langle\mathcal{A}u,u\rangle_{H} & =\frac{1}{2}\left(\|M\left(Q_{-}(\mathcal{H}u)(b)+Q_{+}(\mathcal{H}u)(a)\right)\|^{2}-\|Q_{-}\left(\mathcal{H}u\right)(b)+Q_{+}\left(\mathcal{H}u\right)(a)\|^{2}\right)\\
 & =\frac{1}{2}\left(\|M\tr u\|^{2}-\|\tr u\|^{2}\right)
\end{align*}
for all $u\in\dom(\mathcal{A}).$ Moreover, 
\begin{align*}
\left(\mathcal{H}u\right)(a) & =\iota_{+}(P_{1}^{+})^{-\frac{1}{2}}\iota_{+}^{\ast}Q_{+}\left(\mathcal{H}u\right)(a)+\iota_{-}(P_{1}^{-})^{-\frac{1}{2}}\iota_{-}^{\ast}Q_{-}\left(\mathcal{H}u\right)(a)\\
 & =\iota_{+}(P_{1}^{+})^{-\frac{1}{2}}\iota_{+}^{\ast}\tr u+\iota_{-}(P_{1}^{-})^{-\frac{1}{2}}\iota_{-}^{\ast}M\tr u,\\
\left(\mathcal{H}u\right)(b) & =\iota_{+}(P_{1}^{+})^{-\frac{1}{2}}\iota_{+}^{\ast}Q_{+}\left(\mathcal{H}u\right)(b)+\iota_{-}(P_{1}^{-})^{-\frac{1}{2}}\iota_{-}^{\ast}Q_{-}\left(\mathcal{H}u\right)(b)\\
 & =\iota_{+}(P_{1}^{+})^{-\frac{1}{2}}\iota_{+}^{\ast}M\tr u+\iota_{-}(P_{1}^{-})^{-\frac{1}{2}}\iota_{-}^{\ast}\tr u.
\end{align*}
Now the assertion follows from \prettyref{lem:trace_onto}.
\end{proof}
\begin{prop}
\label{prop:fundamental_matrix}Let $\mathcal{O}\in L_{\infty}([a,b];\R^{d\times d})$
such that $\mathcal{O}(x)$ is self-adjoint for a.e. $x\in[a,b].$
Moreover, let $\Pi\colon[a,b]\to\R^{d\times d}$ be the fundamental
solution to the differential equation 
\[
v'(x)=P_{1}^{-1}(\i\mathcal{O}(x)-P_{0})v(x)
\]
with $\Pi(a)=1_{d}.$ Then the following statements hold:

\begin{enumerate}[(a)]

\item The matrix 
\[
V\coloneqq Q_{-}+Q_{+}\Pi(b)
\]
is invertible with $\|V^{-1}\|\leq C_{1}\|\Pi(b)\|+C_{2},$ where
$C_{1},C_{2}$ are just depending on the values of $P_{1}$. Moreover,
the matrix 
\[
U\coloneqq\left(Q_{-}\Pi(b)+Q_{+}\right)V^{-1}
\]
is unitary.

\item For $y\in\R^{d}$ we have the following estimates
\[
\|y\|^{2}\leq\|V\|^{2}\left(\|\iota_{+}^{\ast}Uy\|_{E_{+}}^{2}+\|\iota_{-}^{\ast}y\|_{E_{-}}^{2}\right)
\]
and 
\[
\|y\|^{2}\leq\|\Pi(b)^{-1}\|^{2}\|V\|^{2}\left(\|\iota_{+}^{\ast}y\|_{E_{+}}^{2}+\|\iota_{-}^{\ast}Uy\|_{E_{-}}^{2}\right).
\]

\end{enumerate}
\end{prop}

\begin{proof}
(a) We recall from \prettyref{lem:inverse_fundamental_matrix} that
$\Pi(b)^{-1}=P_{1}^{-1}\Pi(b)^{\ast}P_{1}$ or equivalently $P_{1}=\Pi(b)^{\ast}P_{1}\Pi(b)$.
We set $W\coloneqq Q_{-}\Pi(b)+Q_{+}$ and compute 
\begin{align*}
V^{\ast}V & =(Q_{-}+\Pi(b)^{\ast}Q_{+})\left(Q_{-}+Q_{+}\Pi(b)\right)\\
 & =Q_{-}^{2}+\Pi(b)^{\ast}Q_{+}^{2}\Pi(b)\\
 & =Q_{+}^{2}-P_{1}+\Pi(b)^{\ast}\left(P_{1}+Q_{-}^{2}\right)\Pi(b)\\
 & =Q_{+}^{2}+\Pi(b)^{\ast}Q_{-}^{2}\Pi(b)\\
 & =W^{\ast}W.
\end{align*}
In particular, we have $\|Vx\|=\|Wx\|$ for each $x\in\R^{d}$. Thus,
if $Vx=0,$ then $Wx=0$ and hence, $Q_{-}x=Q_{-}Vx=0$ as well as
$Q_{+}x=Q_{+}Wx=0.$ The latter gives $P_{1}x=(Q_{+}^{2}-Q_{-}^{2})x=0$
and thus, $x=0$ showing the invertibility of $V.$ Moreover, for
$x\in\R^{d}$ we compute 
\begin{align*}
\|Ux\|^{2} & =\langle WV^{-1}x,WV^{-1}x\rangle\\
 & =\langle V^{-1}x,W^{\ast}WV^{-1}x\rangle\\
 & =\langle V^{-1}x,V^{\ast}x\rangle=\|x\|^{2},
\end{align*}
showing that $U$ is unitary. It remains to prove the estimate for
the norm of the inverse of $V$. We set $D\coloneqq\iota_{+}^{\ast}\Pi(b)\iota_{+}$
and $C\coloneqq\iota_{-}^{\ast}\Pi(b)\iota_{+}$ compute for $x\in E_{-}$
\begin{align*}
\|\left(P_{1}^{+}\right)^{\frac{1}{2}}x\|^{2} & =\langle P_{1}^{+}x,x\rangle=\langle\iota_{+}^{\ast}P_{1}\iota_{+}x,x\rangle\\
 & =\langle\iota_{+}^{\ast}\left(\Pi(b)^{\ast}P_{1}\Pi(b)\right)\iota_{+}x,x\rangle\\
 & =\langle P_{1}\Pi(b)\iota_{+}x,\Pi(b)\iota_{+}x\rangle\\
 & =\langle P_{1}(\iota_{+}\iota_{+}^{\ast}+\iota_{-}\iota_{-}^{\ast})\Pi(b)\iota_{+}x,\Pi(b)\iota_{+}x\rangle\\
 & =\langle P_{1}^{+}Dx,Dx\rangle-\langle P_{1}^{-}Cx,Cx\rangle\\
 & \leq\|\left(P_{1}^{+}\right)^{\frac{1}{2}}Dx\|^{2}\leq\|\left(P_{1}^{+}\right)^{\frac{1}{2}}\|^{2}\|Dx\|^{2}.
\end{align*}
Hence, $D$ is invertible with $\|D^{-1}\|\leq\frac{\|\left(P_{1}^{+}\right)^{-\frac{1}{2}}\|}{\|\left(P_{1}^{+}\right)^{\frac{1}{2}}\|}$.
Since $V$ is unitarily equivalent (via the decomposition $\R^{d}=E_{+}\oplus E_{-})$
to the matrix 
\[
\left(\begin{array}{cc}
\left(P_{1}^{+}\right)^{\frac{1}{2}}D & \left(P_{1}^{+}\right)^{\frac{1}{2}}B\\
0 & \left(P_{1}^{-}\right)^{\frac{1}{2}}
\end{array}\right)
\]
with $B\coloneqq\iota_{+}^{\ast}\Pi(b)\iota_{-}$ , its inverse is
unitarily equivalent to 
\[
\left(\begin{array}{cc}
D^{-1}\left(P_{1}^{+}\right)^{-\frac{1}{2}} & -D^{-1}B\left(P_{1}^{-}\right)^{-\frac{1}{2}}\\
0 & \left(P_{1}^{-}\right)^{-\frac{1}{2}}
\end{array}\right)
\]
and thus, the desired estimate for $\|V^{-1}\|$ follows.%
{} 

(b) Let $y\in\R^{d}$. We compute 
\begin{align*}
\|\iota_{+}^{\ast}Uy\|_{E_{+}}^{2} & =\|\iota_{+}\left(P_{1}^{+}\right)^{-\frac{1}{2}}\iota_{+}^{\ast}Uy\|^{2}\\
 & =\|\iota_{+}\iota_{+}^{\ast}V^{-1}y\|^{2}
\end{align*}
and 
\begin{align*}
\|\iota_{-}^{\ast}y\|_{E_{-}}^{2} & =\|\iota_{-}(P_{1}^{-})^{-\frac{1}{2}}\iota_{-}^{\ast}y\|^{2}\\
 & =\|\iota_{-}(P_{1}^{-})^{-\frac{1}{2}}\iota_{-}^{\ast}VV^{-1}y\|^{2}\\
 & =\|\iota_{-}\iota_{-}^{\ast}V^{-1}y\|.
\end{align*}
Consequently, we obtain 
\[
\|\iota_{+}^{\ast}Uy\|_{E_{+}}^{2}+\|\iota_{-}^{\ast}y\|_{E_{-}}^{2}=\|V^{-1}y\|^{2}\geq\frac{1}{\|V\|^{2}}\|y\|^{2},
\]
which proves the first estimate. Similarly, we compute 
\begin{align*}
\|\iota_{-}^{\ast}Uy\|_{E_{-}}^{2} & =\|\iota_{-}\left(P_{1}^{-}\right)^{-\frac{1}{2}}\iota_{-}^{\ast}Uy\|^{2}\\
 & =\|\iota_{-}\iota_{-}^{\ast}\Pi(b)V^{-1}y\|^{2}
\end{align*}
as well as 
\begin{align*}
\|\iota_{+}^{\ast}y\|_{E_{+}}^{2} & =\|\iota_{+}(P_{1}^{+})^{-\frac{1}{2}}\iota_{+}^{\ast}y\|^{2}\\
 & =\|\iota_{+}(P_{1}^{+})^{-\frac{1}{2}}\iota_{+}^{\ast}VV^{-1}y\|^{2}\\
 & =\|\iota_{+}\iota_{+}^{\ast}\Pi(b)V^{-1}y\|^{2}.
\end{align*}
Hence, we obtain 
\[
\|\iota_{-}^{\ast}Uy\|_{E_{-}}^{2}+\|\iota_{+}^{\ast}y\|_{E_{+}}^{2}=\|\Pi(b)V^{-1}y\|^{2}\geq\frac{1}{\|\Pi(b)^{-1}\|^{2}\|V\|^{2}}\|y\|^{2}.\tag*{{\qedhere}}
\]
\end{proof}
\begin{proof}[Proof of \prettyref{thm:suffcrit-1}]
 By \prettyref{thm:mrcompressed} we need to prove that 
\[
T_{t}\coloneqq W\left(\begin{array}{c}
\Phi_{t}(b)\\
1
\end{array}\right)
\]
is invertible for each $t\in\R$ with $\sup_{t\in\R}\|T_{t}^{-1}\|<\infty.$
By \prettyref{thm:m-accretive-pH} the matrix $W$ can be expressed
by 
\[
W=K\left(\begin{array}{cc}
Q_{+}-MQ_{-} & Q_{-}-MQ_{+}\end{array}\right)
\]
with $\|M\|\leq1$ and $K$ invertible given as in \prettyref{thm:m-accretive-pH}
(iv). Hence, $T_{t}$ has the form 
\begin{align*}
T_{t} & =K\left(\left(Q_{+}-MQ_{-}\right)\Phi_{t}(b)+\left(Q_{-}-MQ_{+}\right)\right)\\
 & =K\left(Q_{-}+Q_{+}\Phi_{t}(b)-M(Q_{-}\Phi_{t}(b)+Q_{+})\right).
\end{align*}
As in \prettyref{prop:fundamental_matrix} (a) we set 
\[
V_{t}\coloneqq Q_{-}+Q_{+}\Phi_{t}(b)
\]
which is an invertible matrix by \prettyref{prop:fundamental_matrix}
(a) and 
\[
U_{t}\coloneqq(Q_{-}\Phi_{t}(b)+Q_{+})V_{t}^{-1}
\]
is unitary by \prettyref{prop:fundamental_matrix} (a). Let $y\in\R^{d}$.
We prove the assertion by showing that there exists some $\kappa>0$
independently of $y$ with 
\[
\|y\|\leq\kappa\|T_{t}y\|\quad(t\in\R).
\]
Using the representation above, we have 
\begin{align*}
T_{t}y & =K\left(V_{t}y-M(Q_{+}\Phi_{t}(b)+Q_{-})y\right)\\
 & =K(1-MU_{t})V_{t}y.
\end{align*}
Since $\|y\|=\|V_{t}^{-1}V_{t}y\|\leq\|V_{t}^{-1}\|\|V_{t}y\|$ and
$\sup_{t\in\R}\|V_{t}^{-1}\|<\infty$ by \prettyref{prop:fundamental_matrix}
(a), it suffices to prove 
\[
\|V_{t}y\|\leq\kappa\|\tilde{T}_{t}y\|\quad(t\in\R),
\]
where $\tilde{T}_{t}=K^{-1}T_{t}$. We employ \prettyref{lem:equiv_sufficient_cond}
to obtain
\[
\|y\|^{2}-\|My\|^{2}\geq2c\left(\|\iota_{-}^{\ast}My\|_{E_{-}}^{2}+\|\iota_{+}^{\ast}y\|_{E_{+}}^{2}\right)
\]
if $\eta=a$ or 
\[
\|y\|^{2}-\|My\|^{2}\geq2c\left(\|\iota_{-}^{\ast}y\|_{E_{-}}^{2}+\|\iota_{+}^{\ast}My\|_{E_{+}}^{2}\right)
\]
if $\eta=b.$ Let us start with the case $\eta=a.$ Applying this
inequality to $U_{t}V_{t}y$ and using that $V_{t}y=\tilde{T}_{t}y+MU_{t}V_{t}y$
as well as that $U_{t}$ is isometric, we get 
\begin{align*}
\|V_{t}y\|^{2} & =\|\tilde{T}_{t}y\|^{2}+2\langle MU_{t}V_{t}y,\tilde{T}_{t}y\rangle+\|MU_{t}V_{t}y\|^{2}\\
 & \leq\|\tilde{T}_{t}y\|^{2}+2\langle MU_{t}V_{t}y,\tilde{T}_{t}y\rangle+\|U_{t}V_{t}y\|^{2}-2c\left(\|\iota_{-}^{\ast}MU_{t}V_{t}y\|_{E_{-}}^{2}+\|\iota_{+}^{\ast}U_{t}V_{t}y\|_{E_{+}}^{2}\right)\\
 & =\|\tilde{T}_{t}y\|^{2}+2\langle V_{t}y-\tilde{T}_{t}y,\tilde{T}_{t}y\rangle+\|V_{t}y\|^{2}-2c\left(\|\iota_{-}^{\ast}V_{t}y-\iota_{-}^{\ast}\tilde{T}_{t}y\|_{E_{-}}^{2}+\|\iota_{+}^{\ast}U_{t}V_{t}y\|_{E_{+}}^{2}\right)\\
 & =-\|\tilde{T}_{t}y\|^{2}+2\langle V_{t}y,\tilde{T}_{t}y\rangle+\|V_{t}y\|^{2}-2c\left(\|\iota_{-}^{\ast}V_{t}y-\iota_{-}^{\ast}\tilde{T}_{t}y\|_{E_{-}}^{2}+\|\iota_{+}^{\ast}U_{t}V_{t}y\|_{E_{+}}^{2}\right).
\end{align*}
Thus, we have 
\begin{align*}
0 & \leq-\|\tilde{T}_{t}y\|^{2}+2\langle V_{t}y,\tilde{T}_{t}y\rangle-2c\left(\|\iota_{-}^{\ast}V_{t}y-\iota_{-}^{\ast}\tilde{T}_{t}y\|_{E_{-}}^{2}+\|\iota_{+}^{\ast}U_{t}V_{t}y\|_{E_{+}}^{2}\right)\\
 & =-\|\tilde{T}_{t}y\|^{2}+2\langle V_{t}y,\tilde{T}_{t}y\rangle-2c\left(\|\iota_{-}^{\ast}V_{t}y\|_{E_{-}}^{2}+\|\iota_{-}^{\ast}\tilde{T}_{t}y\|_{E_{-}}^{2}-2\langle\iota_{-}\left(P_{1}^{-}\right)^{-1}\iota_{-}^{\ast}\tilde{T}_{t}y,V_{t}y\rangle+\|\iota_{+}^{\ast}U_{t}V_{t}y\|_{E_{+}}^{2}\right)\\
 & \leq2\langle V_{t}y,\tilde{T}_{t}y\rangle-2c\left(\|\iota_{-}^{\ast}V_{t}y\|_{E_{-}}^{2}-2\langle\iota_{-}\left(P_{1}^{-}\right)^{-1}\iota_{-}^{\ast}\tilde{T}_{t}y,V_{t}y\rangle+\|\iota_{+}^{\ast}U_{t}V_{t}y\|_{E_{+}}^{2}\right),
\end{align*}
which yields 
\begin{align*}
\|\iota_{-}^{\ast}V_{t}y\|_{E_{-}}^{2}+\|\iota_{+}^{\ast}U_{t}V_{t}y\|_{E_{+}}^{2} & \leq\frac{1}{c}\langle V_{t}y,\tilde{T}_{t}y+2c\iota_{-}\left(P_{1}^{-}\right)^{-1}\iota_{-}^{\ast}\tilde{T}_{t}y\rangle\\
 & \le\frac{1}{2c}\left(\varepsilon\|V_{t}y\|^{2}+\frac{1}{\varepsilon}\|\tilde{T}_{t}y+2c\iota_{-}\left(P_{1}^{-}\right)^{-1}\iota_{-}^{\ast}\tilde{T}_{t}y\|{{}^2}\right)
\end{align*}
for each $\varepsilon>0.$ Invoking \prettyref{prop:fundamental_matrix}
(b), we have 
\[
\|V_{t}y\|^{2}\leq\|V_{t}\|^{2}\left(\|\iota_{-}^{\ast}V_{t}y\|_{E_{-}}^{2}+\|\iota_{+}^{\ast}U_{t}V_{t}y\|_{E_{+}}^{2}\right)\leq\frac{\|V_{t}\|^{2}}{2c}\left(\varepsilon\|V_{t}y\|^{2}+\frac{1}{\varepsilon}\|\tilde{T}_{t}y+2c\iota_{-}\left(P_{1}^{-}\right)^{-1}\iota_{-}^{\ast}\tilde{T}_{t}y\|{{}^2}\right).
\]
Hence, choosing $\varepsilon\coloneqq\frac{c}{\|V_{t}\|^{2}},$ we
derive 
\[
\|V_{t}y\|^{2}\leq\left(\frac{\|V_{t}\|^{2}}{c}\right)^{2}\|\tilde{T}_{t}y+2c\iota_{-}\left(P_{1}^{-}\right)^{-1}\iota_{-}^{\ast}\tilde{T}_{t}y\|{{}^2}\leq\kappa\|\tilde{T}_{t}y\|^{2},
\]
for some $\kappa>0$ independent of $t$ (note that $\sup_{t}\|V_{t}\|<\infty$).
This proves the assertion for the case $\eta=a.$ If $\eta=b,$ an
analogous computation gives
\[
\|V_{t}y\|^{2}\leq-\|\tilde{T}_{t}y\|^{2}+2\langle V_{t}y,\tilde{T}_{t}y\rangle+\|V_{t}y\|^{2}-2c\left(\|\iota_{+}^{\ast}V_{t}y-\iota_{+}^{\ast}\tilde{T}_{t}y\|_{E_{+}}^{2}+\|\iota_{-}^{\ast}U_{t}V_{t}y\|_{E_{-}}^{2}\right)
\]
and hence, 
\[
0\leq2\langle V_{t}y,\tilde{T}_{t}y\rangle-2c\left(\|\iota_{+}^{\ast}V_{t}y\|_{E_{+}}^{2}-2\langle\iota_{+}\left(P_{1}^{+}\right)^{-1}\iota_{+}^{\ast}\tilde{T}_{t}y,V_{t}y\rangle+\|\iota_{-}^{\ast}U_{t}V_{t}y\|_{E_{-}}^{2}\right).
\]
Thus, we infer 
\begin{align*}
\|\iota_{+}^{\ast}V_{t}y\|_{E_{+}}^{2}+\|\iota_{-}^{\ast}U_{t}V_{t}y\|_{E_{-}}^{2} & \leq\frac{1}{c}\langle V_{t}y,\tilde{T}_{t}y+2c\iota_{+}\left(P_{1}^{+}\right)^{-1}\iota_{+}^{\ast}\tilde{T}_{t}y\rangle\\
 & \leq\frac{1}{2c}\left(\varepsilon\|V_{t}y\|^{2}+\frac{1}{\varepsilon}\|\tilde{T}_{t}y+2c\iota_{+}\left(P_{1}^{+}\right)^{-1}\iota_{+}^{\ast}\tilde{T}_{t}y\|{{}^2}\right).
\end{align*}
Hence, involving the second estimate in \prettyref{prop:fundamental_matrix}
(b), we infer 
\[
\|V_{t}y\|^{2}\leq\frac{\|\Phi_{t}(b)^{-1}\|^{2}\|V_{t}\|^{2}}{2c}\left(\varepsilon\|V_{t}y\|^{2}+\frac{1}{\varepsilon}\|\tilde{T}_{t}y+2c\iota_{+}\left(P_{1}^{+}\right)^{-1}\iota_{+}^{\ast}\tilde{T}_{t}y\|{{}^2}\right)
\]
and choosing $\varepsilon\coloneqq\frac{c}{\|\Phi_{t}(b)^{-1}\|^{2}\|V_{t}\|^{2}},$
we end up with 
\[
\|V_{t}y\|_{\R^{d}}^{2}\leq\left(\frac{\|\Phi_{t}(b)^{-1}\|^{2}\|V_{t}\|^{2}}{c}\right)^{2}\|\tilde{T}_{t}y+2c\iota_{+}\left(P_{1}^{+}\right)^{-1}\iota_{+}^{\ast}\tilde{T}_{t}y\|{{}^2}\leq\tilde{\kappa}\|\tilde{T}_{t}y\|^{2},
\]
for some $\tilde{\kappa}>0$ independent of $t$ (note that $\sup_{t}\|\Phi_{t}(b)^{-1}\|<\infty$
by \prettyref{lem:inverse_fundamental_matrix}). 
\end{proof}
We obtain another sufficient condition for exponential stability.
\begin{thm}
Let $\mathcal{A}$ be as in \prettyref{thm:m-accretive-pH} and assume
\prettyref{eq:(B)}. Moreover assume that $W$ satisfies 
\[
W\left(\begin{array}{cc}
P_{1} & -P_{1}\\
1 & 1
\end{array}\right)^{-1}\left(\begin{array}{cc}
0 & 1\\
1 & 0
\end{array}\right)\left(W\left(\begin{array}{cc}
P_{1} & -P_{1}\\
1 & 1
\end{array}\right)^{-1}\right)^{\ast}>0.
\]
Then $\mathcal{A}$ generates an exponentially stable $C_{0}$-semi-group
on $H$. 
\end{thm}

\begin{proof}
Again, we need to prove that $T_{t}\coloneqq W\left(\begin{array}{c}
\Phi_{t}(b)\\
1
\end{array}\right)$ is invertible with $\sup_{t\in\R}\|T_{t}^{-1}\|<\infty.$ By \prettyref{lem:W_vs_M}
we find a matrix $M$ with $\|M\|<1$ and an invertible matrix $K$
such that 
\[
W=K\left(\begin{array}{cc}
Q_{+}-MQ_{-} & Q_{-}-MQ_{+}\end{array}\right)
\]
and thus, $T_{t}$ can be expressed as 
\[
T_{t}=K\left(Q_{+}\Phi_{t}(b)-MQ_{-}\Phi_{t}(b)+Q_{-}-MQ_{+}\right)=K\left(Q_{-}+Q_{+}\Phi_{t}(b)-M(Q_{-}\Phi_{t}(b)+Q_{+})\right).
\]
Using the matrices $V_{t}\coloneqq Q_{-}+Q_{+}\Phi_{t}(b)$ and $U_{t}\coloneqq(Q_{-}\Phi_{t}(b)+Q_{+})V_{t}^{-1}$,
we infer that $U_{t}$ is unitary and 
\[
T_{t}=K(1-MU_{t})V_{t}.
\]
Since $K$ and $V_{t}$ are both invertible and $\sup_{t}\|V_{t}^{-1}\|<\infty$
by \prettyref{prop:fundamental_matrix} (a), it suffices to show that
$1-MU_{t}$ is invertible and its inverse is uniformly bounded in
$t$. This, however, follows from the Neumann series, since $\|M\|<1$
and $\|U_{t}\|=1.$ Hence $(1-MU_{t})^{-1}=\sum_{k=0}^{\infty}(MU_{t})^{k}$
and 
\[
\|(1-MU_{t})^{-1}\|\leq\sum_{k=0}^{\infty}\|M\|^{k}=\frac{1}{1-\|M\|}.\tag*{\qedhere}
\]
\end{proof}

\section{The Condition \ref{eq:(B)}\label{sec:CondB}}

This section is devoted to a discussion of condition \ref{eq:(B)}.
For this, throughout this section, we let $a,b\in\R,$ $a<b$ and
\[
\mathcal{H}\colon\left[a,b\right]\to\R^{d\times d}\in L_{\infty}(a,b;\R^{d\times d})
\]
satisfying $\mathcal{H}(x)=\mathcal{H}(x)^{*}\geq m$ for some $m>0$.
Furthermore, let $P_{1}=P_{1}^{*}\in\R^{d\times d}$ invertible. For
$t\in\R$ we define the fundamental matrix $\Phi_{t}\in C([-\tfrac{1}{2},\tfrac{1}{2}];\mathbb{C}^{d\times d})$
associated to
\[
u'(x)=P_{1}^{-1}(\i t\mathcal{H}(x)^{-1}-P_{0})u(x)\in\mathbb{C}^{d}\quad(x\in(a,b))
\]
subject to $\Phi_{t}(a)=1_{d}$. In this section, we focus on the
condition
\begin{equation}
\sup_{t\in\mathbb{R}}\|\Phi_{t}\|_{\infty}<\infty.\tag{{B}}\label{eq:BPhi}
\end{equation}
Whilst we do not yet know of any counterexamples, we managed to provide
sufficient conditions on $P_{1}$ and $\mathcal{H}$ warranting \prettyref{eq:BPhi}.
These conditions either require some compatibility properties for
$P_{1}$ and $\mathcal{H}$ or regularity properties for $\mathcal{H}$.
In any case, these conditions are somewhat independent of $P_{0}$
as the next result confirms. For this, we use the short-hand $\Phi_{t,P_{0}}$
to denote the above fundamental solution for some fixed $P_{0}$. 
\begin{prop}
\label{prop:wlogP00}In the setting of this section we have
\[
\sup_{t\in\mathbb{R}}\|\Phi_{t,P_{0}}\|_{\infty}<\infty\iff\sup_{t\in\mathbb{R}}\|\Phi_{t,0}\|_{\infty}<\infty.
\]
\end{prop}

We recall an estimate of general nature.
\begin{lem}
\label{lem:wlogP00}Let $\mathcal{O}\in L_{\infty}(a,b;\R^{d\times d})$
and $\Psi\in C([a,b];\R^{d\times d})$ be the fundamental solution
of 
\[
u'(x)=\mathcal{O}(x)u(x)
\]
with $\Psi(a)=1_{d}$. If $f\in L_{1}(a,b;\R^{d})$, then any continuous
solution, $u$, of 
\[
u'(x)=\mathcal{O}(x)u(x)+f(x)
\]
satisfies
\[
\|u(x)\|\leq\|\Psi\|_{\infty}\|u(a)\|+\|\Psi\|_{\infty}\|\Psi(\cdot)^{-1}\|_{\infty}\int_{a}^{x}|f(s)|\d s.
\]
\end{lem}

\begin{proof}
We employ the variations of constants formula
\[
u(x)=\Psi(x)u(a)+\int_{a}^{x}\Psi(x)\Psi(s)^{-1}f(s)\d s,
\]
which can be readily verified. We, thus, estimate
\[
\|u(x)\|\leq\|\Psi\|_{\infty}\|u(a)\|+\|\Psi\|_{\infty}\|\Psi(\cdot)^{-1}\|_{\infty}\int_{a}^{x}|f(s)|\d s.\tag*{{\qedhere}}
\]
\end{proof}
Next we address the fact that $P_{0}$ can, in fact, be assumed to
be $0$.
\begin{proof}[Proof of \prettyref{prop:wlogP00}]
 Let $P_{0}=-P_{0}^{*}$ and assume that $\sup_{t\in\mathbb{R}}\|\Phi_{t,P_{0}}\|_{\infty}<\infty$.
Let $u_{0}\in\R^{d}$ be a unit vector and consider the differential
equation
\begin{equation}
u'(x)=P_{1}^{-1}\i t\mathcal{H}(x)^{-1}u(x),\quad u(a)=u_{0}.\label{eq:P00}
\end{equation}
Denote by $u_{t}$ its solution. Next, let $v_{t}$ be the unique
solution of
\begin{equation}
u'(x)=P_{1}^{-1}(\i t\mathcal{H}(x)^{-1}-P_{0})u(x),\quad u(a)=u_{0}.\label{eq:Pn0}
\end{equation}
Then
\[
u_{t}'(x)-v_{t}'(x)=P_{1}^{-1}(\i t\mathcal{H}(x)^{-1}-P_{0})(u_{t}(x)-v_{t}(x))+P_{1}^{-1}P_{0}u_{t}(x).
\]
By Lemma \ref{lem:wlogP00} and Lemma \ref{lem:inverse_fundamental_matrix},
we infer 
\[
\|u_{t}(x)-v_{t}(x)\|\leq\|\Phi_{t,P_{0}}\|_{\infty}^{2}\|P_{1}\|\|P_{1}^{-1}\|^{2}\|P_{0}\|\int_{a}^{x}\|u_{t}(s)\|\d s.
\]
Hence, using the assumption, we obtain
\begin{align*}
\|u_{t}(x)\| & \leq\|v_{t}(x)\|+\|u_{t}(x)-v_{t}(x)\|\\
 & \leq\|\Phi_{t,P_{0}}\|_{\infty}+\|\Phi_{t,P_{0}}\|_{\infty}^{2}\|P_{1}\|\|P_{1}^{-1}\|^{2}\|P_{0}\|\int_{a}^{x}\|u_{t}(s)\|\d s.
\end{align*}
Gronwall's lemma thus confirms that
\[
\|u_{t}(x)\|\leq\|\Phi_{t,P_{0}}\|_{\infty}\exp\left((b-a)\|\Phi_{t,P_{0}}\|_{\infty}^{2}\|P_{1}\|\|P_{1}^{-1}\|^{2}\|P_{0}\|\right).
\]
Computing the supremum over $t\in\R$ yields the assertion.

Next, let us assume that $\sup_{t\in\mathbb{R}}\|\Phi_{t,0}\|_{\infty}<\infty.$
Similarly, as before, let $u_{0}\in\R^{d}$ be a unit vector and let
$u_{t}$ and $v_{t}$ be the respective solutions of \eqref{eq:P00}
and \eqref{eq:Pn0}. Considering
\[
u_{t}'(x)-v_{t}'(x)=P_{1}^{-1}\i t\mathcal{H}(x)^{-1}(u_{t}(x)-v_{t}(x))+P_{1}^{-1}P_{0}v_{t}(x)
\]
 and estimating as before, we eventually get the assertion as above.
\end{proof}
We may now turn to the structural assumption connecting the positive
and negative spectral subspaces of $P_{1}$ and the mapping properties
of $\mathcal{H}$.

\subsection{A compatibility condition of $\mathcal{H}$ and $P_{1}$}

We start off with a condition irrespective of any regularity of $\mathcal{H}$.
We recall from \prettyref{sec:Preliminaries}
\begin{align*}
E_{+} & =\Span\{x\in\R^{d}\,;\,\exists\lambda>0:P_{1}x=\lambda x\},\\
E_{-} & =\Span\{x\in\R^{d}\,;\,\exists\lambda<0:P_{1}x=\lambda x\}.
\end{align*}
Then $E_{+}\oplus E_{-}=\R^{d}$ in the sense of an orthogonal sum,
since $P_{1}$ is self-adjoint and invertible.

The desired result reads as follows
\begin{thm}
\label{thm:dood2} Assume that, for almost every $x\in(a,b)$,
\[
\mathcal{H}(x)[E_{+}]\subseteq E_{+}.
\]
 Then \prettyref{eq:BPhi} holds.
\end{thm}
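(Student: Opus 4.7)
The plan is to derive \prettyref{thm:dood2} as a corollary of \prettyref{thm:dood} by showing that the hypothesis $\mathcal{H}(x)[E_+]\subseteq E_+$ is precisely what translates, under the diagonalisation machinery from \prettyref{sec:Preliminaries}, into the commutation relation $D\mathcal{O}(s)=\mathcal{O}(s)D$ almost everywhere.

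First, I would observe that the invariance of $E_+$ under $\mathcal{H}(x)$ automatically implies the invariance of $E_-$: indeed, $E_-=E_+^\perp$ because $P_1$ is selfadjoint, and for any $v\in E_-$, $u\in E_+$ one has $\langle\mathcal{H}(x)v,u\rangle=\langle v,\mathcal{H}(x)u\rangle=0$ since $\mathcal{H}(x)u\in E_+$. Hence $\mathcal{H}(x)v\in E_+^\perp=E_-$ for a.e.\ $x$.

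Next, I would recall the construction of $U$ and $Q$ in \prettyref{sec:Preliminaries}: the columns of $U$ are eigenvectors of $P_1$ ordered so that the first $k$ columns span $E_+$ and the last $d-k$ span $E_-$, and $Q=\diag(1/\sqrt{|\lambda_i|})$ is diagonal with block structure $\diag(Q_+,Q_-)$ compatible with the partition $\R^d=\R^k\oplus\R^{d-k}$ that defines $D$. The invariance of $E_\pm$ under $\mathcal{H}(x)$ then means that $U^{\ast}\mathcal{H}(x)U$ has vanishing off-diagonal blocks with respect to this partition; that is,
\[
U^{\ast}\mathcal{H}(x)U=\begin{pmatrix}A(x) & 0\\ 0 & B(x)\end{pmatrix}
\]
for suitable selfadjoint matrix-valued functions $A$ and $B$. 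Conjugating with the block-diagonal $Q$ preserves this form, so by \prettyref{eq:OH}, $\mathcal{O}(s)$ is block diagonal of the same shape for a.e.\ $s\in(-\tfrac12,\tfrac12)$. In particular $\mathcal{O}(s)$ commutes with $D=\diag(1_k,-1_{d-k})$.

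Finally, I would invoke \prettyref{thm:dood} to conclude $\sup_{t\in\R}\|\Gamma_t\|_\infty<\infty$, which by \prettyref{prop:fundamental_matrix} is equivalent to \prettyref{eq:BPhi}. The only step requiring care is the bookkeeping that identifies $E_\pm$ with the first/last block in the $D$-partition after the change of variables; this is not really an obstacle, but is the only place where one must be attentive. No estimates beyond those already available in \prettyref{thm:dood} are needed.
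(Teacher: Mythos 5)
Your proof is correct and follows essentially the same route as the paper: the paper isolates the translation between $\mathcal{H}(x)[E_+]\subseteq E_+$ and $D\mathcal{O}(s)=\mathcal{O}(s)D$ into a separate lemma (Lemma \ref{lem:structuralHyp}, parts (a) and (b)), using the same ingredients you spell out (selfadjointness giving $E_-$-invariance, columns of $U$ spanning $E_\pm$, block-diagonality of $Q$), and then invokes Theorem \ref{thm:dood} and Proposition \ref{prop:fundamental_matrix} exactly as you do.
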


\begin{proof}
By \prettyref{prop:wlogP00}, without restriction, we may assume $P_{0}=0$.
We consider the case $E_{+}=\R^{d}$ first. Let $u_{0}\in\R^{d}$
and let $u_{t}$ be the solution of 
\[
u'(x)=P_{1}^{-1}\i t\mathcal{H}(x)^{-1}u(x),\quad u(a)=u_{0}.
\]
Multiplying the equation by $P_{1}^{1/2}$ we obtain
\[
\left(P_{1}^{1/2}u\right)'(x)=\i tP_{1}^{-1/2}\mathcal{H}(x)^{-1}P_{1}^{-1/2}\left(P_{1}^{1/2}u\right)(x).
\]
Hence, the equation satisfied by $u_{t}$ is equivalent to $w=P_{1}^{1/2}u_{t}$
solving
\[
w'(x)=\i tP_{1}^{-1/2}\mathcal{H}(x)^{-1}P_{1}^{-1/2}w(x)\quad w(a)=P_{1}^{1/2}u_{0}.
\]
By self-adjointness of $\mathcal{H}(x)$ and $P_{1}$ it follows that
$tP_{1}^{-1/2}\mathcal{H}(x)^{-1}P_{1}^{-1/2}$ is self-adjoint. Thus,
we deduce
\[
\frac{1}{2}\frac{\d}{\d x}\|w(x)\|^{2}=\Re\langle w(x),\i tP_{1}^{-1/2}\mathcal{H}(x)^{-1}P_{1}^{-1/2}w(x)\rangle=0.
\]
 Thus, $\|w(x)\|\leq\|P_{1}^{1/2}u_{0}\|,$ which proves the assertion
for $E_{+}=\mathbb{R}^{d}$. 

For the general case, it follows that the assumption guarantees that
$\mathcal{H}(x)$ reduces $E_{+}$ and, hence, also $E_{-}$. The
same properties follow for $\mathcal{H}(x)^{-1}$. Hence, the system
is actually block-diagonal, with each block similar to the type considered
in the special case (for the $E_{-}$-block use the previous rationale
multiplying by $\left(P_{1}^{-}\right)^{1/2}$). This shows the assertion.
\end{proof}
\begin{example}
\label{exa:scalarH}Let $\mathcal{H}$ be \emph{scalar}-\emph{valued};
i.e., there exists bounded scalar function $h\colon\left[a,b\right]\to\R$
such that $\inf_{x\in\left[a,b\right]}h(x)>0$ with $\mathcal{H}(x)=h(x)1_{d}$
for almost every $x\in[a,b]$. Then the hypothesis in Theorem \ref{thm:dood2}
is satisfied and hence, \ref{eq:(B)} holds for the corresponding
$(\Phi_{t})_{t}$. 
\end{example}

\begin{proof}[Proof of \prettyref{thm:scalarH}]
 By \prettyref{thm:mrcompressed} we need to look at $\Phi_{t}$
for scalar-valued $\mathcal{H}$. Thus, let $\mathcal{H}=h1_{d}$
for some scalar function $h$. Then differentiation shows that 
\[
\Phi_{t}(x)=\e^{\i t\int_{a}^{x}h(\sigma)^{-1}\d\sigma P_{1}^{-1}}.
\]
As $h$ is scalar, by \prettyref{exa:scalarH}, $t\mapsto\|\Phi_{t}\|_{\infty}$
is bounded. Since $\Phi_{t}(b)=\e^{\i t\int_{a}^{b}h(\sigma)^{-1}\d\sigma P_{1}^{-1}}$
and $\int_{a}^{b}h(\sigma)^{-1}\d\sigma\neq0$, the second condition
\prettyref{thm:scalarH} is equivalent to the second one in \prettyref{thm:mrcompressed}.
This shows the assertion.
\end{proof}
With the results of this section, we can also prove another theorem
from the introduction.
\begin{proof}[Proof of \prettyref{thm:suffcrit-3}]
 The claim follows using \prettyref{exa:scalarH} and \prettyref{thm:suffcrit-1}.
\end{proof}
The next example provides a set-up for which the Hamiltonian density
can be as rough as $L_{\infty}$, but the corresponding port-Hamiltonian
operator still generates an exponentially stable semi-group.
\begin{example}
Let 
\begin{align*}
P_{1}=\left(\begin{array}{cc}
-1 & 0\\
0 & 1
\end{array}\right),\quad W=\left(\begin{array}{cc}
\left(\begin{array}{cc}
\tfrac{1}{2} & 0\\
\tfrac{1}{2} & -1
\end{array}\right) & \left(\begin{array}{cc}
-1 & -\tfrac{1}{2}\\
0 & \tfrac{1}{2}
\end{array}\right)\end{array}\right), & \quad P_{0}=0
\end{align*}
and $\mathcal{H}$ be scalar-valued. Then the corresponding port-Hamiltonian
$\mathcal{A}$ generates a contraction semi-group. Furthermore, using
the formula in (ii) in \prettyref{thm:scalarH}, we get
\begin{align*}
\tau_{t} & =\left(\begin{array}{cc}
\left(\begin{array}{cc}
\tfrac{1}{2} & 0\\
\tfrac{1}{2} & -1
\end{array}\right) & \left(\begin{array}{cc}
-1 & -\tfrac{1}{2}\\
0 & \tfrac{1}{2}
\end{array}\right)\end{array}\right)\left(\begin{array}{c}
\left(\begin{array}{cc}
\e^{-\i t} & 0\\
0 & \e^{\i t}
\end{array}\right)\\
\left(\begin{array}{cc}
1 & 0\\
0 & 1
\end{array}\right)
\end{array}\right)\\
 & =\left(\begin{array}{cc}
\tfrac{1}{2} & 0\\
\tfrac{1}{2} & -1
\end{array}\right)\left(\begin{array}{cc}
\e^{-\i t} & 0\\
0 & \e^{\i t}
\end{array}\right)+\left(\begin{array}{cc}
-1 & -\tfrac{1}{2}\\
0 & \tfrac{1}{2}
\end{array}\right)\left(\begin{array}{cc}
1 & 0\\
0 & 1
\end{array}\right)\\
 & =\left(\begin{array}{cc}
\tfrac{1}{2}\e^{-\i t}-1 & -\tfrac{1}{2}\\
\tfrac{1}{2}\e^{-\i t} & \tfrac{1}{2}-\e^{\i t}
\end{array}\right).
\end{align*}
Next, 
\begin{align*}
\det\tau_{t} & =(\tfrac{1}{2}\e^{-\i t}-1)(\tfrac{1}{2}-\e^{\i t})+\tfrac{1}{4}\e^{-\i t}=\tfrac{1}{2}\e^{-\i t}-1+\e^{\i t}.
\end{align*}
This expression is $2\pi$-periodic. It thus, suffices to consider
$t\in[0,2\pi)$. Since $\Im\det\tau_{t}=\frac{1}{2}\sin t=0$ if and
only if $t\in\left\{ 0,\pi\right\} $. For these values, however,
we have $\det\tau_{0}=\frac{1}{2}$ and $\det\tau_{\pi}=-\frac{5}{2}.$
By continuity, $\min_{t\in\R}\left|\det\tau_{t}\right|=\min_{t\in[0,2\pi]}\left|\det\tau_{t}\right|>0.$
Thus, Cramer's rule implies that $\tau_{t}$ is invertible with uniform
bound for the inverse. Hence, the corresponding port-Hamiltonian semi-group
is exponentially stable. 
\end{example}

\subsection{A regularity condition on $\mathcal{H}$ \label{subsec:A-regularity-condition}}

The aim of this subsection is to show that \prettyref{thm:suffcrit-1}
contains all the cases already contained in \prettyref{thm:suffcrit}.
For this recall the setting from the beginning of \prettyref{sec:CondB};
also we briefly define what it means for a function to be of bounded
variation.
\begin{defn}
Let $I\subseteq\mathbb{R}$ be an open interval, $\alpha\in L_{1}(I)$.
Then $\alpha$ is of \emph{bounded variation}, if 
\[
\sup\{|\int_{I}\alpha(x)\phi'(x)\d x|;\phi\in C_{c}^{1}(I),\|\phi\|_{\infty}\leq1\}<\infty.
\]
By the Riesz--Markov representation theorem, there exists a unique
signed Radon measure, $\D_{x}\alpha$, on the Borel sets of $I$ such
that 
\[
-\int_{I}\alpha\phi'=\int_{I}\phi\d\D_{x}\alpha\eqqcolon\langle\D_{x}\alpha,\phi\rangle,
\]
for each $\phi\in C_{c}^{1}(I)$. Moreover 
\[
\|\mathrm{D}_{x}\alpha\|\coloneqq|\mathrm{D}_{x}\alpha|(I)=\sup\{|\int_{I}\alpha(x)\phi'(x)\d x|;\phi\in C_{c}^{1}(I),\|\phi\|_{\infty}\leq1\},
\]
where $|\D_{x}\alpha|$ denotes the total variation of the measure
$\D_{x}\alpha.$ The space 
\[
BV(I)\coloneqq\{\alpha\in L_{1}(I);\alpha\text{ of bounded variation}\}
\]
becomes a Banach space, if endowed with the norm given by
\[
\|\alpha\|_{BV}\coloneqq\|\alpha\|_{L_{1}}+\|\D_{x}\alpha\|.
\]
A matrix-valued function $G\colon(a,b)\to\mathbb{R}^{d\times d}$
is called \emph{of bounded variation,} if all its components are of
bounded variation. In this case we set
\[
\D_{x}G\coloneqq\left(\D_{x}G_{ij}\right)_{i,j\in\{1,\ldots,n\}}
\]
as a matrix-valued measure.
\end{defn}

\begin{thm}
\label{thm:HBV}Assume that $\mathcal{H}$ is of bounded variation.
Then \ref{eq:(B)} is satisfied.
\end{thm}

This results immediately yields a proof for \prettyref{prop:constantH_0}:
\begin{proof}[Proof of \prettyref{prop:constantH_0}]
 Any constant is of bounded variation. Thus, the result follows from
\prettyref{thm:HBV}.
\end{proof}
For the proof of \prettyref{thm:HBV}, some preliminaries are in order.
The material is widely known. We shall, however, summarise and prove
some particular findings needed in the present situation. Note that
the author of \cite{Leoni2009} focuses on right-continuous instead
of left-continuous functions. The arguments, however, are similar
in either cases so we still quote the results without proof.
\begin{thm}[{\cite[Theorem 7.2 and Theorem 5.13]{Leoni2009}}]
 \label{thm:LeftcontBV}Let $I=(a,b)\subseteq\mathbb{R}$ be an interval
and $\alpha\in L_{1}(I)$. Then the following conditions are equivalent:\begin{enumerate}

\item[(i)] $\alpha\in BV(I)$,

\item[(ii)] there exists a left-continuous representative, $\alpha_{\textnormal{lc}}$,
of $\alpha$ such that
\[
\var(\alpha_{\textnormal{lc}})\coloneqq\sup_{a<t_{0}<t_{1}<\cdots<t_{n}<b}\sum_{j=1}^{n}|\alpha(t_{j})-\alpha(t_{j-1})|<\infty.
\]
\end{enumerate}In either case, $\var(\alpha_{\textnormal{lc}})=\|\D_{x}\alpha\|$
and $\alpha_{\textnormal{lc}}$ may be chosen according to
\[
\alpha_{\textnormal{lc}}(x)\coloneqq\alpha(t_{0})+\begin{cases}
\D_{x}\alpha(([t_{0},x)), & x>t_{0},\\
-\D_{x}\alpha([x,t_{0})), & x\leq t_{0},
\end{cases}
\]
for a Lebesgue point $t_{0}\in(a,b)$ of $\alpha$.
\end{thm}

An immediate consequence of the previous theorem is that if the Hamiltonian
energy density $\mathcal{H}$ is of bounded variation, the same is
true for $\mathcal{H}^{-1}$.
\begin{prop}
\label{prop:inverseBV}Let $\mathcal{O}\colon(a,b)\to\mathbb{R}^{d\times d}\in L_{\infty}(a,b;\R^{d\times d})$
be of bounded variation and assume that $\Re\mathcal{O}(x)\geq m$
for some $m>0$ and a.e.~$x\in(a,b).$ Then $x\mapsto\mathcal{O}(x)^{-1}$
is of bounded variation.
\end{prop}

\begin{proof}
Since every matrix entry of $\mathcal{O}$ is in $L_{1}$, we may
choose a common Lebesgue point $t_{0}$ for all matrix entries. By
Theorem \ref{thm:LeftcontBV}, the function given by 
\[
\mathcal{O}_{\textnormal{lc}}(x)\coloneqq\mathcal{O}(t_{0})+\left(\begin{cases}
\D_{x}\mathcal{O}_{i,j}([t_{0},x)), & x>t_{0},\\
-\D_{x}\mathcal{O}_{i.j}([x,t_{0})), & x\leq t_{0}
\end{cases}\right)_{i,j}
\]
defines a left-continuous representative of $\mathcal{O}$. By composition,
$x\mapsto\mathcal{O}_{\textnormal{lc}}(x)^{-1}$ is, too, left-continuous
and evidently it is a representative of $x\mapsto\mathcal{O}(x)^{-1}$.
Since $\|\mathcal{O}(x)^{-1}\|\leq1/m$ by $\Re\mathcal{O}(x)\geq m$
and the boundedness of $(a,b)$ it follows that $x\mapsto\mathcal{O}(x)^{-1}\in L_{1}(a,b;\R^{d})$.
Next, let $a<x_{0}<\cdots<x_{n}<b$ and compute
\begin{align*}
\sum_{j=1}^{n}\|\mathcal{O}_{\textnormal{lc}}(x_{j})^{-1}-\mathcal{O}_{\textnormal{lc}}(x_{j-1})^{-1}\| & \leq\sum_{j=1}^{n}\|\mathcal{O}_{\textnormal{lc}}(x_{j})^{-1}\left(\mathcal{O}_{\textnormal{lc}}(x_{j-1})-\mathcal{O}_{\textnormal{lc}}(x_{j})\right)\mathcal{O}_{\textnormal{lc}}(x_{j-1})^{-1}\|\\
 & \leq\sum_{j=1}^{n}\|\mathcal{O}_{\textnormal{lc}}(x_{j})^{-1}\|\|\left(\mathcal{O}_{\textnormal{lc}}(x_{j-1})-\mathcal{O}_{\textnormal{lc}}(x_{j})\right)\|\|\mathcal{O}_{\textnormal{lc}}(x_{j-1})^{-1}\|\\
 & \leq\frac{1}{m^{2}}\sum_{j=1}^{n}\|\left(\mathcal{O}_{\textnormal{lc}}(x_{j-1})-\mathcal{O}_{\textnormal{lc}}(x_{j})\right)\|.
\end{align*}
Thus, if $\kappa>0$ is such that $\|A\|\leq\kappa\sum_{i,j}|A_{i,j}|$,
for every $k,l\in\{1,\ldots,d\}$, 
\[
\var\left(\mathcal{O}_{\textnormal{lc}}(\cdot)^{-1}\right)_{k,l}\leq\frac{\kappa}{m^{2}}\sum_{i,j}\var\mathcal{O}_{\textnormal{lc}}(\cdot)_{i,j}<\infty.
\]
Hence, by Theorem \ref{thm:LeftcontBV}, the assertion follows.
\end{proof}
\begin{thm}
\label{thm:productruleBVH1}Let $I\subseteq\mathbb{R}$ be an open
and bounded interval, $\alpha\in BV(I)\cap L_{\infty}(I).$ If $u\in H^{1}(I)$,
then $\alpha u\in BV(I)\cap L_{\infty}(I)$ and
\[
\D_{x}(\alpha u)=u\D_{x}\alpha+u'\alpha\d\lambda,
\]
where $\d\lambda$ denotes the Lebesgue measure and $u\D_{x}\alpha$
is the measure $\D_{x}\alpha$ with density $u$. Moreover, we have
\[
\|\D_{x}(\alpha u)\|\leq\|\D_{x}\alpha\|\|u\|_{H^{1}}+\|\alpha\|_{\infty}\|u\|_{H^{1}}\sqrt{b-a}.
\]
\end{thm}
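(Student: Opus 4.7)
The strategy is a two-step approximation: first verify the product rule when $u$ is smooth, then extend to all $u\in H^{1}(I)$ by density together with the one-dimensional Sobolev embedding $H^{1}(I)\hookrightarrow C(\overline{I})$.

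For $u\in C^{1}(\overline{I})$ and $\phi\in C_{c}^{1}(I)$, the product $u\phi$ again lies in $C_{c}^{1}(I)$ and is therefore admissible as a test function in the definition of $\D_{x}\alpha$. Combining this with the classical product rule $(u\phi)'=u'\phi+u\phi'$ yields
\[
-\int_{I}\alpha u\phi'\d x=\int_{I}u\phi\d\D_{x}\alpha+\int_{I}\alpha u'\phi\d x,
\]
which identifies the distributional derivative of $\alpha u$ as the finite signed measure $u\,\D_{x}\alpha+\alpha u'\,\d\lambda$.

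For general $u\in H^{1}(I)$, density of $C^{1}(\overline{I})$ in $H^{1}(I)$ combined with the Sobolev embedding allows one to pick $u_{n}\in C^{1}(\overline{I})$ with $u_{n}\to u$ both in $H^{1}(I)$ and uniformly on $\overline{I}$. Fixing $\phi\in C_{c}^{1}(I)$, the identity of the previous paragraph (applied to $u_{n}$) passes to the limit term by term: the left-hand side and the last integral on the right are handled via $\alpha\in L_{\infty}(I)$ together with the $L_{2}$-convergence of $u_{n}$, respectively $u_{n}'$; the first integral on the right uses dominated convergence, exploiting finiteness of $|\D_{x}\alpha|$ and uniform convergence of $u_{n}\phi$ on $\overline{I}$. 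This extends the identity to arbitrary $u\in H^{1}(I)$, and shows in particular that $\alpha u\in BV(I)\cap L_{\infty}(I)$.

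The norm estimate then follows by taking the supremum over $\phi\in C_{c}^{1}(I)$ with $\|\phi\|_{\infty}\leq1$ in the just-established identity: the $\D_{x}\alpha$-integral is bounded by $\|u\|_{\infty}\|\D_{x}\alpha\|$, and the Lebesgue integral by $\|\alpha\|_{\infty}\|u'\|_{L_{1}(I)}$; Cauchy--Schwarz converts $\|u'\|_{L_{1}(I)}$ into $\sqrt{b-a}\,\|u'\|_{L_{2}(I)}\leq\sqrt{b-a}\,\|u\|_{H^{1}(I)}$, while the Sobolev embedding controls $\|u\|_{\infty}$ by $\|u\|_{H^{1}(I)}$. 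The only subtle point in the whole argument is the justification that an $H^{1}$-function may act as a multiplier on a $C_{c}^{1}$-test function inside the defining relation for $\D_{x}\alpha$; this is exactly what the above approximation accomplishes, and it works because $\D_{x}\alpha$ is a \emph{finite} Radon measure and $H^{1}(I)$ embeds continuously into $C(\overline{I})$ on the bounded interval $I$.
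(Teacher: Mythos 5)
Your proof is correct and follows essentially the same route as the paper's: establish the identity for $C^{1}$ multipliers via the classical product rule, derive the norm estimate, and then pass to general $u\in H^{1}(I)$ by approximating with smooth functions, using the bounded 1D Sobolev embedding to control $\|u\|_{\infty}$ and to justify the limit passage. The paper's write-up is slightly more terse about the limiting step, but the underlying argument and estimates are the same.
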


\begin{proof}
Let $\phi\in C_{c}^{1}(I)$. Assume that $u$ is continuously differentiable.
Then we compute 
\begin{align*}
-\int_{I}\alpha u\phi'd\lambda & =-\int_{I}\alpha((u\phi)'-u'\phi)\d\lambda\\
 & =\langle\D_{x}\alpha,u\phi\rangle+\int_{I}u'\alpha\phi\d\lambda\\
 & =\langle u\D_{x}\alpha+u'\alpha\d\lambda,\phi\rangle
\end{align*}
We estimate
\begin{align*}
|\int_{I}\alpha u\phi'\d\lambda| & \leq\|\D_{x}\alpha\|\|u\phi\|_{\infty}+\|\alpha\|_{\infty}\|u'\phi\|_{L_{1}}\\
 & \leq\|\D_{x}\alpha\|\|u\|_{\infty}\|\phi\|_{\infty}+\|\alpha\|_{\infty}\|u'\|_{L_{2}}\|\phi\|_{L_{2}}\\
 & \leq\|\D_{x}\alpha\|\|u\|_{H^{1}}\|\phi\|_{\infty}+\|\alpha\|_{\infty}\|u\|_{H^{1}}\sqrt{b-a}\|\phi\|_{\infty}\\
 & =\left(\|\D_{x}\alpha\|\|u\|_{H^{1}}+\|\alpha\|_{\infty}\|u\|_{H^{1}}\sqrt{b-a}\right)\|\phi\|_{\infty}.
\end{align*}
Hence, $\alpha u\in BV(I)$ and 
\[
\|\D_{x}(\alpha u)\|\leq\|\D_{x}\alpha\|\|u\|_{H^{1}}+\|\alpha\|_{\infty}\|u\|_{H^{1}}\sqrt{b-a}.
\]
Moreover, we estimate
\[
\|\alpha u\|_{L_{1}}\leq\|u\|_{\infty}\|\alpha\|_{L_{1}}\leq c\|u\|_{H^{1}}\|\alpha\|_{L_{1}},
\]
by the Sobolev embedding theorem. In particular, let $(u_{n})_{n}$
be a sequence of continuously differentiable functions converging
to $u$ in $H^{1}$. Then, by the estimates above, $\alpha u\in BV(I)$.
Moreover, using the product formula from the beginning of the proof
for $u_{n}$ and letting $n\to\infty$, we infer
\[
\D_{x}(\alpha u)=u\D_{x}\alpha+u'\alpha\d\lambda.\tag*{{\qedhere}}
\]
\end{proof}
Next, we recall Gronwall's inequality for locally finite measures.
\begin{defn}
Let $I\subseteq\R$ be an interval. We call a Borel measure $\mu$
on $I$ \emph{locally finite}, if for all compact $K\subseteq I$,
$\mu(K)<\infty$. 
\end{defn}

\begin{thm}[{\cite[Lemma A.1]{Seifert2013}}]
\label{thm:Groenwald}Let $I\subseteq\R$ be an interval. Let $u\colon I\to\R$
and $\alpha\colon I\to[0,\infty)$ measurable. Assume that for a locally
finite Borel measure $\mu$ on $I$, we have $u$ is locally integrable
and there exists $a\in I$ such that for all $t>a$
\[
u(t)\leq\alpha(t)+\int_{[a,t)}|u(s)|\d\mu(s).
\]
Then 
\[
u(t)\leq\alpha(t)+\int_{[a,t)}\alpha(s)\exp(\mu((s,t)))\d\mu(s)\quad(t>a).
\]
\end{thm}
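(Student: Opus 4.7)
The plan is iteration followed by summation of an exponential series. First I would reduce to the case $u \geq 0$: the right-hand side of the hypothesis depends only on $|u|$, so replacing $u$ by $u \vee 0$ does not weaken the inequality, and we may drop the absolute values. Iterating the resulting estimate $n$ times yields
\[
u(t) \leq \alpha(t) + \sum_{k=1}^{n} J_k(t) + R_n(t),
\]
where
\[
J_k(t) = \int_{\{a \leq s_k < s_{k-1} < \cdots < s_1 < t\}} \alpha(s_k) \, d\mu(s_1)\cdots d\mu(s_k)
\]
and $R_n(t)$ is the $n$-fold iterate of $u$ integrated against the same simplex.

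The heart of the matter is the bound
\[
J_k(t) \leq \int_{[a,t)} \alpha(s) \, \frac{\mu((s,t))^{k-1}}{(k-1)!} \, d\mu(s),
\]
which I would prove by induction on $k$: fixing the innermost variable $s_k = s$, the remaining $(k-1)$-fold integration runs over the strictly ordered simplex in $(s,t)^{k-1}$, and symmetrising over the permutation group $S_{k-1}$ identifies this integral with a $(k-1)!$-th of $\mu((s,t))^{k-1}$. Summing over $k$ collapses the series to an exponential,
\[
\sum_{k=1}^{\infty} \frac{\mu((s,t))^{k-1}}{(k-1)!} = \exp(\mu((s,t))),
\]
and Fubini produces the claimed estimate. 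The remainder is controlled by $R_n(t) \leq \|u\|_{L_1([a,t),\mu)} \cdot \mu([a,t))^{n-1}/(n-1)!$, which vanishes as $n\to\infty$ by local integrability of $u$ and local finiteness of $\mu$.

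The main technical obstacle I anticipate is the symmetrisation step when $\mu$ carries atoms. If $\mu$ is continuous, the diagonal $\{s_i = s_j\}$ is $\mu^{\otimes k}$-null and the strict simplex coincides, up to measure zero, with its $S_k$-orbit, making the identity exact; in the atomic case this fails and the strict ordering is precisely what forces the open interval $(s,t)$ — rather than $[s,t)$ — inside the exponential, as stated in the theorem. I would handle this by separating ordered tuples with pairwise distinct coordinates from those with coincidences and discarding the latter on account of the strict inequalities, using monotone approximation of $\alpha$ by simple functions to make the bookkeeping rigorous. Since the target is only an inequality, any defect produced by atoms is absorbed harmlessly into the exponential upper bound.
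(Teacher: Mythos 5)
The paper gives no proof of this statement; it is cited from \cite[Lemma A.1]{Seifert2013}, so there is no in-paper argument to compare against. Your core argument --- Picard iteration of the hypothesis, the symmetrisation bound $\mu^{\otimes(k-1)}\bigl(\{s<s_{k-1}<\dots<s_1<t\}\bigr)\leq\mu\bigl((s,t)\bigr)^{k-1}/(k-1)!$ (an inequality rather than an equality precisely because of possible atoms), summation of the resulting exponential series via monotone convergence, and the tail estimate $R_n(t)\leq\|u\|_{L_1([a,t);\mu)}\,\mu\bigl([a,t)\bigr)^{n-1}/(n-1)!\to 0$ using local finiteness of $\mu$ on the compact $[a,t]\subseteq I$ and local $\mu$-integrability of $u$ --- is the standard proof of a measure-valued Gronwall inequality and is correct. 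Your observation that the strict inequalities in the iterated integrals are exactly what force the \emph{open} interval $(s,t)$ into the exponent is precisely right.

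The one flaw is the opening reduction, and it is a genuine fallacy. Writing $v\coloneqq u\vee 0$, nonnegativity of the right-hand side gives $v(t)\leq\alpha(t)+\int_{[a,t)}|u(s)|\d\mu(s)$, but since $|u|\geq v$ with strict inequality wherever $u<0$, you cannot then replace $|u|$ by $v$ in the integrand; the inequality $v(t)\leq\alpha(t)+\int_{[a,t)}v(s)\d\mu(s)$ does \emph{not} follow, and that is the estimate your iteration actually needs. In fact the statement is false as transcribed for $u$ of arbitrary sign: take $\mu=\delta_0$, $a=-1$, $\alpha\equiv 1$, and $u$ equal to $-100$ at $0$, equal to $50$ on $(0,\infty)$, and $0$ on $(-1,0)$; the hypothesis holds for every $t>-1$, yet for $t>0$ the asserted conclusion would give $u(t)\leq 1+\e^{\mu((0,t))}=2$. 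So your instinct that the sign of $u$ needs attention is sound, but the resolution is not the reduction you propose; it is to note that the lemma, as used in the paper (where $u=\|v(\cdot)\|^2\geq 0$) and as it must be read to be true, presupposes $u\geq 0$ (equivalently, $|u(t)|$ rather than $u(t)$ on the left of the hypothesis). Under that reading the absolute values are superfluous and your iteration applies directly, with no reduction needed.
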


Now, we are in the position to show the main result of this subsection.
\begin{proof}[Proof of \prettyref{thm:HBV}]
 Let $v\in H^{1}(a,b)^{d}$ and $t\in\R$ satisfy 
\[
v'(x)=\i tP_{1}^{-1}\mathcal{H}(x)^{-1}v(x),\quad\left(x\in(a,b)\right).
\]
By Proposition \prettyref{prop:inverseBV}, $\mathcal{O}\colon x\mapsto\mathcal{H}(x)^{-1}$
is of bounded variation. Referring to \prettyref{thm:LeftcontBV},
without loss of generality, we may assume that $\mathcal{O}$ is left-continuous.
Next, note that 
\begin{align*}
\langle v(x),\mathcal{O}(x)v(x)\rangle_{\C^{d}} & =\sum_{j=1}^{d}\sum_{k=1}^{d}v_{j}(x)^{*}\mathcal{O}_{jk}(x)v_{k}(x)\\
 & =\langle\mathcal{O}(x),\left(v_{j}(x)^{*}\right)_{j}v(x)^{\top}\rangle_{\C^{d\times d}}
\end{align*}
for all $x\in(a,b)$. Thus, by \prettyref{thm:productruleBVH1}, we
obtain
\begin{align*}
\D_{x}\langle v,\mathcal{O}v\rangle_{\C^{d}} & =\langle\mathcal{O},\partial_{x}(\left(v_{j}(\cdot)^{*}\right)_{j}v^{\top})\rangle\d\lambda+\langle\textrm{D}_{x}\mathcal{O},vv^{\top}\rangle\\
 & =\left(\langle v',\mathcal{O}v\rangle+\langle v,\mathcal{O}v'\rangle\right)\d\lambda+\langle\textrm{D}_{x}\mathcal{O},vv^{\top}\rangle\\
 & =\left(\langle\i tP_{1}^{-1}\mathcal{O}v,\mathcal{O}v\rangle+\langle\mathcal{O}v,\i tP_{1}^{-1}\mathcal{O}v\rangle\right)\d\lambda+\langle\textrm{D}_{x}\mathcal{O},vv^{\top}\rangle\\
 & =\langle\D_{x}\mathcal{O},vv^{\top}\rangle.
\end{align*}
Since $v$ is continuous by the Sobolev embedding theorem, $\langle v,\mathcal{O}v\rangle_{\C^{d}}$
is also left-continuous. For $s,t\in(a,b)$ with $s<t$ we therefore
obtain by \prettyref{thm:LeftcontBV}
\begin{align*}
\langle v(t),\mathcal{O}(t)v(t)\rangle_{\C^{d}} & =\langle v(s),\mathcal{O}(s)v(s)\rangle+\D_{x}\langle v,\mathcal{O}v\rangle_{\C^{d}}([s,t))\\
 & =\langle v(s),\mathcal{O}(s)v(s)\rangle+\sum_{j=1}^{d}\sum_{k=1}^{d}\int_{[s,t)}v_{j}(\sigma)^{*}v_{k}(\sigma)\d\D_{x}\mathcal{O}_{jk}(\sigma)
\end{align*}
Hence, using our general assumption on boundedness of $\mathcal{H}$
and $\mathcal{H}(x)\geq m$, for some $c>0$ we estimate 
\begin{align*}
c\|v(t)\|^{2} & \leq\langle v(t),\mathcal{O}(t)v(t)\rangle\\
 & \leq\|v(s)\|^{2}\frac{1}{m}+\frac{1}{2}\int_{(s,t]}\|v(\sigma)\|^{2}\d\mu(\sigma),
\end{align*}
where $\mu\coloneqq\sum_{j,k=1}^{d}|\D_{x}\mathcal{O}_{jk}|$. Note
that $\mu$ is a finite measure on $(a,b)$. Using \prettyref{thm:Groenwald},
we get
\begin{align*}
\|v(t)\|^{2} & \leq\tfrac{1}{cm}\|v(s)\|^{2}\left(1+\frac{1}{2c}\int_{[s,t)}\e^{\frac{1}{2c}\mu\left((\sigma,t)\right)}\d\mu(\sigma)\right)\\
 & \leq\tfrac{1}{cm}\|v(s)\|^{2}\left(1+\tfrac{1}{2c}\e^{\frac{1}{2c}\mu((a,b))}\mu((a,b))\right),
\end{align*}
Letting now $s\to a$ in the last inequality, we derive 
\[
\|v(t)\|\leq C\|v(a)\|^{2}\quad(t\in[a,b])
\]
for some constant $C>0$, proving the desired result.
\end{proof}
We finally obtain a proof of \prettyref{thm:suffcrit}.
\begin{proof}[Proof of \prettyref{thm:suffcrit}]
 The statement is an immediate consequence of \prettyref{thm:suffcrit-1}
and \prettyref{thm:HBV}.
\end{proof}

\section{Conclusion\label{sec:Conclusion}}

We presented a new characterisation of exponential stability for port-Hamiltonian
systems. The characterisation works only if a certain family of fundamental
solutions of a non-autonomous ODE-system is uniformly bounded. Whether
or not this boundedness is needed lies beyond the scope of this article
and can be considered an \textbf{open problem }for the time-being.
We emphasise that as the strategy above uses existence and boundedness
of the resolvent of the generator on the imaginary axis it might be
possible, invoking results such as \cite{Arendt1988,Batty_Duyckaerts_2008},
to show explicit algebraic decay for certain set-ups. Whether at all
these set-ups exists and how they maybe characterised will be addressed
in future work.

\section*{Acknowledgements}

We thank the anonymous referee of a former version (and submission)
of this manuscript for spotting a mistake, which eventually led to
the present major revision containing corrected statements as well
as leaner arguments and proofs. We also thank the anonymous referee
of the actual submission, in particular for providing the nice and
short proof of \prettyref{prop:fundamental_matrix} (a).

\end{document}